\newtheorem*{rep@theorem}{\rep@title}
\newcommand{\newreptheorem}[2]{%
\newenvironment{rep#1}[1]{%
 \def\rep@title{#2 \ref{##1}}%
 \begin{rep@theorem}}%
 {\end{rep@theorem}}}
\numberwithin{equation}{section}
\theoremstyle{plain}%
\newtheorem{theorem}{Theorem}
\numberwithin{theorem}{section}
\newtheorem{proposition}[theorem]{Proposition}
\newtheorem{example}[theorem]{Example}
\newtheorem{lemma}[theorem]{Lemma}
\newtheorem{corollary}[theorem]{Corollary}
\newtheorem{definition}[theorem]{Definition}
\newtheorem{remark}[theorem]{Remark}
\newtheorem{conjecture}[theorem]{Conjecture}
\newcommand{\R}{\mathbb{R}}
\newcommand{\sM}{\mathcal{M}}
\newcommand{\cone}{\mathrm{cone}}
\newcommand{\conv}{\mathrm{conv}}
\newcommand{\rank}{\mathrm{rank}}
\date{\today}
\begin{document}
\title{\textbf{Positive semidefinite rank and nested spectrahedra}}
\author{Kaie Kubjas, Elina Robeva and Richard Z. Robinson}
\maketitle

\begin{abstract} The set of matrices of given positive semidefinite rank is semialgebraic. In this paper we study the geometry of this set, and in small cases we describe its boundary. For general values of positive semidefinite rank we provide a conjecture for the description of this boundary. Our proof techniques are geometric in nature and rely on nesting spectrahedra between polytopes.
\end{abstract}

\section{Introduction}

Standard matrix factorization is used in a wide range of applications including statistics, optimization, and machine learning. To factor a given a matrix $M\in\mathbb R^{p\times q}$ of $\rank(M)=r$, we need to find size-$r$ vectors $a_1,...,a_p, b_1,...,b_q\in\mathbb R^r$ such that  $M_{ij} = \langle a_i, b_j\rangle$.

Often times, however, the matrix at hand as well as the elements in the factorization are required to have certain positivity structure~\cite{FP, GPT13, GPT15}. In statistical mixture models, for instance, we need to find a {\em nonnegative} factorization of the matrix at hand~\cite{CR93,GG12,KRS15,Vavasis09}. In other words, the vectors $a_i$ and $b_j$ need to be nonnegative. In the present article we study a more general type of factorization called positive semidefinite factorization. 
The vectors $a_i$ and $b_j$ in the decomposition are now replaced by $k\times k$ symmetric positive semidefinite matrices $A_i, B_j\in\mathcal S^k_+$, and $k$ is the size of the positive semidefinite factorization of $M$. Here the space of symmetric $k\times k$ matrices is denoted by $\mathcal S^k$, the cone of $k\times k$ positive semidefinite matrices by $\mathcal S^k_+$, and the inner product on $\mathcal S^k$ is given by 
$$
\langle A,B \rangle =\text{trace}(AB).
$$

\begin{definition}Given a matrix $M\in\mathbb R^{p\times q}_{\geq 0}$ with nonnegative entries, a {\em positive semidefinite (psd) factorization} of size $k$ is a collection of matrices $A_1,...,A_p, B_1,..., B_q\in\mathcal S^{k}_+$ such that $M_{ij} = \langle A_i,B_j\rangle$. The {\em positive semidefinite rank} {\em (psd rank)} of the matrix $M$ is the smallest $k \in \mathbb{N}$ for which such a factorization exists. It is denoted by $\rank_{psd}(M)$.
\end{definition}
The nonnegativity constraint on the entries of $M$ is natural here since for any two psd matrices $A, B\in\mathcal S^k_+$, it is always the case that $\langle A, B\rangle \geq 0$. To see this, write $A = UU^T, B = VV^T$ for some $U,V\in\mathbb R^{k\times k}$. Then, $\text{trace}(AB) = \text{trace}((V^TU)(V^TU)^T) \geq 0$ since $((V^TU)(V^TU)^T)$ is positive semidefinite. Thus, in order for $M$ to have finite psd rank, its entries need to be nonnegative.

Given a polytope $P$, the smallest number $k$ such that the polytope can be written as a projection of a linear slice of $\mathcal S^k_+$ is called the semidefinite extension complexity of $P$. This quantity is also equal to the psd rank of a slack matrix for the polytope $P$. This connection between positive semidefinite rank and semidefinite extension complexity is analogous to the connection between nonnegative rank and linear extension complexity, established in the seminal paper of Yannakakis~\cite{Yannakakis91}. This was the first paper in the line of work providing super-polynomial lower bounds on the linear and semidefinite extension complexities of families of polytopes~\cite{FMPTW,Rothvoss14,LRST,FSP15,LRS15}. The geometric aspects as well as many of the properties of psd rank have been studied in a number of recent articles~\cite{FGPRT,GPT13,GPT15,GPRT,GRT13,GRT}.
 
In this paper we study the space $\mathcal M_{r, k}^{p\times q}$ (or $\mathcal M_{r,k}$ for short) of $p \times q$ nonnegative matrices of rank at most $r$ and psd rank at most $k$. 
By Tarski-Seidenberg's Theorem \cite[Theorem 2.76]{basu2005algorithms} this set is semialgebraic, i.e. it is defined by finitely many polynomial equations and inequalities, or it is a finite union of such sets. It lies inside the variety $\mathcal V^{p\times q}_r$ (or $\mathcal V_r$ for short) of $p\times q$ matrices of rank at most~$r$. We study the geometry of $\mathcal M_{r, k}$, and in particular, we investigate the {\em boundary} $\mathcal \partial \mathcal  M_{r,k}$ of $\mathcal M_{r, k}$ as a subset of $\mathcal V_r$.

\begin{definition} The {\em topological boundary} of $\mathcal M_{r, k}$, denoted by $\partial \mathcal M_{r, k}$, is its boundary as a subset of $\mathcal V_{r}$. In other words, it consists of all matrices $M\in\mathcal V_{r}$ such that for every $\epsilon > 0$, the ball with radius $\epsilon$ and center $M$, denoted by $\mathcal B_{\epsilon}(M)$, satisfies the condition that $\mathcal B_{\epsilon}(M)\cap\mathcal V_r$ intersects $\mathcal M_{r, k}$ as well as its complement $\mathcal V_r\setminus \mathcal M_{r, k}$.
The {\em algebraic boundary} of $\mathcal M_{r, k}$, denoted by $\overline{\partial\mathcal M_{r, k}}$ is the Zariski closure of $\partial \mathcal M_{r, k}$ over~$\mathbb R$.
\end{definition}

In Section \ref{sec:psdrank2}, we completely describe $\partial \mathcal M^{p\times q}_{3, 2}$, as well as $\overline{\partial \mathcal M^{p\times q}_{3, 2}}$. More precisely,  Corollary \ref{cor:algebraic} shows that a matrix $M$ lies on the boundary $\partial\mathcal M^{p\times q}_{3, 2}$ if and only if in every psd factorization $M_{ij} = \langle A_i, B_j\rangle$, at least three of the matrices $A_1,\dots, A_p$ and at least three of the matrices $B_1,\dots, B_q$ have rank one.

In Sections~\ref{sec:geometricInterpretation} and~\ref{sec:higherPsdRank}, we study the general case $\partial\mathcal M^{p\times q}_{r, k}$.  Conjecture \ref{thm:k+1k} is an analogue of Corollary \ref{cor:algebraic}. It states that a matrix $M$ lies on the boundary $\partial \mathcal M^{p\times q}_{r, k}$ if and only if in every psd factorization $M_{ij} = \langle A_i, B_j\rangle$, at least $k+1$ of the matrices $A_1,\dots, A_p$ have rank one and at least $k+1$ of the matrices $B_1,\dots, B_q$ have rank one. In Section~\ref{sec:5.1}, we give theoretical evidence supporting this conjecture in the simplest situation where $p=q=r=k+1$. In Section \ref{section:higher_psd_rank}, we present computational examples.  Our code is available at
\[ \hbox{\tt https://github.com/kaiekubjas/psd-rank}\hspace{0.1cm} .\]

Our results are based on a geometric interpretation of psd rank, which is explained in Section \ref{sec:preliminaries}. Given a nonnegative matrix $M$ of rank $r$ satisfying $M \mathbf 1=\mathbf 1$, we can associate to it nested polytopes   $P\subseteq Q \subseteq \R^{r-1} $. Theorem \ref{thm:spectrahedron}, proved in \cite{GRT}, shows that $M$ has psd rank at most $k$ if and only if we can fit a projection of a slice of the cone of $k\times k$ positive semidefinite matrices $\mathcal{S}^k_+$
between $P$ and $Q$. When we restrict to the case when the rank of $M$ is three, this result states that $M$ has psd rank two if and only if we can nest an ellipse between the two nested polygons $P$ and $Q$ associated to $M$. In Theorem \ref{main_theorem} we show that $M$ lies on the boundary $\partial \mathcal M^{p\times q}_{3, 2}$ if and only if every ellipse that nests between the two polygons $P$ and $Q$, touches at least three vertices of $P$ and at least three edges of $Q$. The statement of Conjecture \ref{conjecture:geometric_description2} is analogous to the statement of Theorem \ref{main_theorem} for the general case~$\partial \mathcal M^{p \times q}_{r, k}$.

\subsection*{Acknowledgments}

Part of this work was done while the first and second authors were visiting the Simons Institute for the Theory of Computing, UC Berkeley. We thank Kristian Ranestad and Bernd Sturmfels for very helpful discussions, Rekha Thomas for reading the first draft of the article and Sophia Sage Elia for making Figure~\ref{fig:circulant_matrices_3D}.

\section{Preliminaries}\label{sec:preliminaries}

Many of the basic properties of psd rank have been studied in \cite{FGPRT}. We give a brief overview of the results used in the present article.
\subsection{Bounds}
The psd rank of a matrix is bounded below by the inequality $$\rank(M)\leq \binom{\rank_{\text{psd}}(M)+1}2$$ since one can vectorize the symmetric matrices in a given psd factorization and consider the trace inner product as a dot product. On the other hand, the psd rank is upper bounded by the nonnegative rank $$\rank_{psd}(M)\leq \rank_+(M)$$ since one can obtain a psd factorization from a nonnegative factorization by using diagonal matrices. The psd rank of $M$ can be any integer satisfying these inequalities.

\subsection{Geometric description}
\subsubsection*{From nested polytopes to nonnegative matrices}

We now describe the geometric interpretation of psd rank.
Let $P\subseteq \mathbb R^{r-1}$ be a polytope and $Q\subseteq\mathbb R^{r-1}$ be a polyhedron such that $P\subseteq Q$. Assume that $P = \conv\{v_1,...,v_p\}$ and $Q$ is given by the inequality representation $Q = \{x\in\mathbb R^{r-1} : h_j^Tx \leq z_j, j=1,...,q\}$, where $v_1,...,,v_p, h_1,...,h_q\in\mathbb R^{r-1}$ and $z_1,\dots, z_q\in\mathbb R$. The {\em generalized slack matrix} of the pair $P, Q$, denoted by $S_{P,Q}$, is the $p\times q$ matrix whose $(i,j)$-th entry is
$z_j - h_j^T v_i $. 

\begin{remark}
The generalized slack matrix depends on the representations of $P$ and $Q$ as the convex hull of finitely many points and as the intersection of finitely many half-spaces whereas the slack matrix depends only on $P$ and $Q$. We will abuse the notation and write $S_{P,Q}$ for the generalized slack matrix as by the next result the $\rank_{psd}(S_{P, Q})$ is independent of the representations of $P$ and $Q$.
\end{remark}

\begin{theorem}[Proposition~3.6 in~\cite{GRT}]\label{thm:spectrahedron} Let $P\subset \mathbb R^{r-1}$ be a polytope and $Q \subseteq \mathbb R^{r-1}$ a polyhedron such that $P\subseteq Q$. Then, $\rank_{psd}(S_{P, Q})$ is the smallest integer $k$ for which there exists an affine subspace $L$ of $\mathcal S^k$ and a linear map $\pi$ such that $P\subseteq \pi(L\cap\mathcal S^k_+) \subseteq Q$.
\end{theorem}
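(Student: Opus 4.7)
My plan is to prove the biconditional in Theorem \ref{thm:spectrahedron} by constructing each direction explicitly, with a Farkas-type duality for the positive semidefinite cone as the one technical ingredient. It is cleanest to pass to the homogenized setting: replace $P \subseteq Q$ by the cones
$$C_P = \cone\{(1,v_i) : i=1,\dots,p\} \subseteq C_Q = \{(t,x) \in \mathbb{R}^r : h_j^T x \leq z_j t,\, j=1,\dots,q\},$$
so that $P$ and $Q$ are recovered as the slices at $t=1$, and the pairing of $\tilde h_j = (z_j, -h_j)$ against $(1, v_i)$ still equals $S_{P,Q}(i,j)$.

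For the forward implication (small $\rank_{psd}$ yields a spectrahedral lift), suppose $\langle A_i, B_j \rangle = z_j - h_j^T v_i$ with $A_i, B_j \in \mathcal{S}^k_+$. I would build a linear map $\tilde \pi: \mathcal{S}^k \to \mathbb{R}^r$ with $\tilde \pi(A_i) = (1, v_i)$. Well-definedness: any dependence $\sum \alpha_i A_i = 0$ pairs with each $B_j$ to give $\sum \alpha_i (z_j - h_j^T v_i) = 0$, which forces $(\sum \alpha_i,\, \sum \alpha_i v_i)$ into the lineality space of $C_Q$, and that space is trivial once $C_Q$ is pointed (a harmless assumption after a small perturbation, if needed). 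The extended $\tilde \pi$ then satisfies $\langle \tilde h_j, \tilde \pi(X) \rangle = \langle B_j, X \rangle \geq 0$ on $\mathcal{S}^k_+$, so $\tilde \pi(\mathcal{S}^k_+) \subseteq C_Q$. Setting $L = \tilde\pi^{-1}(\{t=1\})$ and taking $\pi$ to be the composition of $\tilde \pi$ with the projection onto the last $r-1$ coordinates recovers the desired lift $P \subseteq \pi(L \cap \mathcal{S}^k_+) \subseteq Q$ of size $k$.

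For the reverse implication, suppose $P \subseteq \pi(L \cap \mathcal{S}^k_+) \subseteq Q$. For each vertex $v_i$ choose any preimage $A_i \in L \cap \mathcal{S}^k_+$ with $\pi(A_i) = v_i$. For each facet $j$ of $Q$, the affine functional $\ell_j(x) = z_j - h_j^T x$ is nonnegative on $Q$ and hence on $\pi(L \cap \mathcal{S}^k_+)$, so $\ell_j \circ \pi$ is an affine functional on $L$ that is nonnegative on the spectrahedron $L \cap \mathcal{S}^k_+$. I would then invoke the SDP Farkas lemma: such a functional can be written in the form $X \mapsto \langle B_j, X \rangle$ for some $B_j \in \mathcal{S}^k_+$, using the self-duality $(\mathcal{S}^k_+)^* = \mathcal{S}^k_+$ together with a Slater-type regularity hypothesis (or a face-reduction argument, which only shrinks the ambient psd cone). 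Evaluating at $A_i$ yields $\langle A_i, B_j \rangle = z_j - h_j^T v_i = S_{P,Q}(i,j)$, giving a psd factorization of size $k$.

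The hard part will be the Farkas step in the reverse direction: strong SDP duality can fail when the spectrahedron is not strictly feasible, and the affine (rather than linear) nature of $\ell_j \circ \pi$ means a constant must be absorbed into $\langle B_j, \cdot \rangle$ via the affine structure of $L$. I would handle this uniformly by performing the Farkas argument on the homogenized side (cones in $\mathbb{R}^r$, linear functionals, and $\mathcal{S}^k_+$ itself playing the role of the spectrahedron so that self-duality applies directly) and only at the end slicing at $t=1$ to recover the affine polyhedral statement; the same homogenization dispatches the well-definedness concern in the forward direction.
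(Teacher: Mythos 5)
This result is cited in the paper from Gouveia--Robinson--Thomas \cite{GRT} (Proposition~3.6) rather than proved, so I will assess your sketch against the standard argument for that proposition. Your overall plan --- homogenize, define the linear map on the span of the $A_i$'s, certify the forward containment by pairing with the $B_j$'s, and get the $B_j$'s in the reverse direction from an SDP Farkas argument --- is the right strategy. But there are two real gaps as written.

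First, in the forward direction you claim that \emph{after extending} $\tilde\pi$ from $\operatorname{span}\{A_1,\dots,A_p\}$ to all of $\mathcal S^k$, the identity $\langle \tilde h_j, \tilde\pi(X)\rangle = \langle B_j, X\rangle$ holds ``on $\mathcal S^k_+$.'' That identity is only forced on $\operatorname{span}\{A_i\}$; an arbitrary linear extension will generally violate it, and then $\tilde\pi(\mathcal S^k_+)\subseteq C_Q$ fails. With your choice $L = \tilde\pi^{-1}(\{t=1\})$ the set $L\cap\mathcal S^k_+$ will typically contain matrices outside $\operatorname{span}\{A_i\}$, for which you have no control. The fix is to take $L$ inside that span (for instance, $L=\operatorname{aff}\{A_1,\dots,A_p\}$), where the positivity certificate $\langle \tilde h_j,\tilde\pi(X)\rangle=\langle B_j,X\rangle\ge 0$ actually holds for $X\succeq 0$. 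Second, the ``harmless perturbation'' to make $C_Q$ pointed is not harmless: perturbing the facet data or adding inequalities changes the generalized slack matrix $S_{P,Q}$ and hence potentially its psd rank, so it cannot be invoked freely. The well-definedness of $\tilde\pi$ on $\operatorname{span}\{A_i\}$ should instead be handled by observing that any dependence lands in the lineality space $\Lambda$ of $C_Q$, that $S_{P,Q}$ is invariant under translating the lifted vertices by $\Lambda$, and therefore one may pass to $\mathbb R^r/\Lambda$ (or pick a complement to $\Lambda$) without altering the factorization problem. On the reverse direction your instinct is right that homogenizing to a linear slice $\hat L=\operatorname{span}(L)$ makes the Farkas certificate automatically homogeneous and avoids the constant-term defect; but note that $\hat L\cap\mathcal S^k_+$ can contain psd matrices whose $t$-coordinate under $\hat\pi$ is $0$ or negative (they are not positive scalings of $L\cap\mathcal S^k_+$), and you should verify that the nonnegativity of $\hat\ell_j$ still holds there, for example by first reducing to the face of $\mathcal S^k_+$ spanned by $\hat L\cap\mathcal S^k_+$ to get strict feasibility, as you already anticipate.
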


A {\em spectrahedron} of size $k$ is an affine slice of the cone $\mathcal S_+^k$ of $k\times k$ positive semidefinite matrices. A {\em spectrahedral shadow} of size $k$ is a projection of a spectrahedron of size $k$. Therefore, Theorem \ref{thm:spectrahedron} states that the matrix $S_{P, Q}$ has psd rank at most $k$ if and only if one can fit a spectrahedral shadow of size $k$ between $P$ and $Q$.

\begin{remark}
Given $M$, the polytopes $P$ and $Q$ are not unique, but the statement of Theorem~\ref{thm:spectrahedron} holds regardless of which pair $P, Q$ such that $M = S_{P, Q}$, is chosen. 
\end{remark}

\subsubsection*{From nonnegative matrices to nested polytopes}

Given a $p \times q$ nonnegative matrix $M$, we can assume that it contains no zero rows as removing zero rows does not change its psd rank. Secondly, we may assume that $\mathbf 1$ is contained in the column span of $M$ as scaling its rows by scalars also keeps the psd rank fixed. Consider a rank-size factorization $M=AB$ with $A$ having rows $A_i=(a_i^T,1)$. Let
$$
P=\text{conv}(a_1,\ldots,a_p) \text{ and } Q=\{x \in \R^{r-1}:(x^T,1)B \geq 0\}.
$$
Then $P \subseteq Q$ and $S_{P,Q}=M$.\

Without loss of generality, we may further assume that $M\mathbf 1 = \mathbf 1$ by scaling the rows of $M$ by its row sums. The following lemma shows that in this case we can choose $P$ and $Q$ to be bounded.

\begin{lemma}[Lemma~4.1 in~\cite{FGPRT}]\label{lem:nest} Let $M\in\mathbb R^{p\times q}_{\geq 0}$ be a nonnegative matrix and assume that $M\mathbf 1 = \mathbf 1$. Let $\rank(M) = r$. Then, there exist polytopes $P, Q\subseteq \mathbb R^{r-1}$ such that $P\subseteq Q$ and $M$ is the slack matrix of the pair $P, Q$.
\end{lemma}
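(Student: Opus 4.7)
The plan is to use the pair $(P, Q)$ already constructed in the paragraph preceding the lemma and to show that the added hypothesis $M\mathbf 1 = \mathbf 1$ forces the polyhedron $Q$ to be bounded. Since $P = \conv(a_1,\ldots,a_p)$ is automatically a polytope, the only content of the lemma is the boundedness of $Q$; combined with the identities $P \subseteq Q$ and $S_{P,Q}=M$, which have already been established in the construction, this will complete the proof.

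First I would translate $M\mathbf 1 = \mathbf 1$ into a condition on the factorization $M = AB$. Because $A$ is $p\times r$ with full column rank (as it arises from a rank-size factorization) and its last column is $\mathbf 1$, we have $\mathbf 1 = A e_r$, where $e_r$ is the $r$-th standard basis vector of $\R^r$. The computation $A(B\mathbf 1) = M\mathbf 1 = \mathbf 1 = A e_r$, combined with the injectivity of $A$ on $\R^r$, then yields $B\mathbf 1 = e_r$. Unpacked, this says that the top $r-1$ rows of $B$ each sum to $0$, while the last row sums to $1$.

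Next I would analyze the recession cone of $Q = \{x\in\R^{r-1}:(x^T,1)B \geq 0\}$. Writing $B'$ for the matrix formed by the top $r-1$ rows of $B$, a direction $d\in\R^{r-1}$ lies in the recession cone exactly when $d^T B' \geq 0$ entrywise. Summing these $q$ nonnegative inequalities and using the row-sum identities from the previous paragraph collapses the total to $\sum_{i=1}^{r-1} d_i \cdot 0 = 0$, so each entry of $d^T B'$ must vanish, i.e., $d^T B' = 0$. Since $\rank(B) = r$ forces $\rank(B') \geq r-1$, and $B'$ has only $r-1$ rows, $B'$ has full row rank; hence $d = 0$. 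Thus $Q$ has trivial recession cone and is bounded, so $Q$ is a polytope as required.

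The only real obstacle is this boundedness step, and it reduces to two elementary ingredients: the averaging trick that converts $B\mathbf 1 = e_r$ into the vanishing of $d^T B'$, and the rank estimate $\rank(B') = r-1$. Both follow from nothing beyond the existence of a rank-size factorization, so the proof should go through cleanly.
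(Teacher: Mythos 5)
Your proof is correct. The paper itself does not reprove this lemma---it simply cites Lemma~4.1 of~\cite{FGPRT}---so there is no internal argument to compare against, but your reduction to boundedness of $Q$ is exactly the content that needs to be supplied once the construction in the preceding paragraph is in hand. The two substantive steps both hold up: (i) $A(B\mathbf 1)=M\mathbf 1=\mathbf 1=Ae_r$ together with the injectivity of $A$ (full column rank from the rank-size factorization) gives $B\mathbf 1=e_r$, hence the top $r-1$ rows of $B$ sum to zero; and (ii) for $d$ in the recession cone $\{d:d^TB'\geq 0\}$ of $Q$, summing the $q$ nonnegative entries yields $d^T(B'\mathbf 1)=0$, forcing $d^TB'=0$, and since $B'$ is $(r-1)\times q$ of rank exactly $r-1$ (removing one row from a rank-$r$ matrix drops the rank by at most one), $d=0$. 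A nonempty polyhedron ($P\subseteq Q$ gives nonemptiness, with the hypothesis $M\mathbf1=\mathbf1$ already ruling out zero rows and placing $\mathbf1$ in the column span as required for the construction) with trivial recession cone is bounded, so $Q$ is a polytope. This is a clean, self-contained argument that legitimately fills in what the paper outsources to the reference.
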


\subsubsection*{The geometry of $\mathcal M_{r,k}^{p\times q}$}

A point $M \in \mathcal M^{p\times q}_{r, k}$ is an {\em interior point} of $\mathcal M^{p\times q}_{r, k}$ if there is an open ball $B_{\epsilon}(M) \subset R^{p \times q}$ that satisfies $B_{\epsilon}(M) \cap \mathcal V^{p\times q}_{r} = B_{\epsilon}(M) \cap \mathcal M^{p\times q}_{r, k}$.
 By the following lemma, we can check whether a matrix lies in the interior or boundary of $\mathcal M_{r,k}^{p\times q}$ by checking this for its rescaling that satisfies $M\mathbf 1=\mathbf1$.

\begin{lemma}\label{lem:rescale} A matrix $M\in\mathbb R^{p\times q}_{\geq 0}$ without zero rows lies in the interior of $\mathcal M_{r, k}$ if and only if  the matrix $N$, obtained from $M$ by rescaling such that $N\mathbf 1=\mathbf1$, lies in the interior of $\mathcal M_{r, k} \cap \{P \in \mathbb R^{p\times q}_{\geq 0} :P\mathbf 1 = \mathbf1\}$ with respect to $\mathcal V_r\cap  \{P \in \mathbb R^{p\times q}_{\geq 0} :P\mathbf 1 = \mathbf1\}$. 
\end{lemma}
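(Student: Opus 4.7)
My plan is to reduce the lemma to a product-topology observation. Let $T$ denote the group of positive $p \times p$ diagonal matrices acting on matrices by left multiplication, let $\Omega := \{M \in \R^{p \times q}_{\geq 0} : M \text{ has no zero rows}\}$ (an open subset of $\R^{p \times q}$), and let $\Sigma := \{P \in \R^{p\times q}_{\geq 0} : P\mathbf 1 = \mathbf 1\}$. First I would verify that both $\mathcal V_r$ and $\mathcal M_{r,k}$ are $T$-invariant: rescaling rows by positive scalars preserves rank, and if $M_{ij} = \langle A_i, B_j\rangle$ is a psd factorization of size $k$, then $(DM)_{ij} = \langle d_i A_i, B_j\rangle$ is one for $DM$, where $D = \mathrm{diag}(d_1,\ldots,d_p) \in T$. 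So the $T$-action preserves the relevant loci.

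Next I would introduce the map $\psi : \Omega \to T \times \Sigma$ defined by $\psi(M) := (D_M, D_M^{-1}M)$ with $D_M := \mathrm{diag}(M\mathbf 1)$. It is a homeomorphism with continuous inverse $(D, N) \mapsto DN$; one checks $\psi(DN) = (D, N)$ using $N\mathbf 1 = \mathbf 1$. By the $T$-invariance above, $\psi$ restricts to homeomorphisms
\[
\mathcal V_r \cap \Omega \;\cong\; T \times (\mathcal V_r \cap \Sigma), \qquad \mathcal M_{r,k} \cap \Omega \;\cong\; T \times (\mathcal M_{r,k} \cap \Sigma),
\]
where on both sides the subspace topology coincides with the product topology.

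Finally, since $\Omega$ is open in $\R^{p \times q}$, for $M \in \Omega$ the condition of being interior to $\mathcal M_{r,k}$ in $\mathcal V_r$ is unchanged if we work inside $\mathcal V_r \cap \Omega$. Transporting this condition along $\psi$ and applying the identity $\mathrm{int}_{X \times Y}(A \times B) = \mathrm{int}_X(A) \times \mathrm{int}_Y(B)$, it becomes: $D_M \in \mathrm{int}_T(T) = T$ (automatic) and $N := D_M^{-1} M \in \mathrm{int}_{\mathcal V_r \cap \Sigma}(\mathcal M_{r,k} \cap \Sigma)$. This yields the stated equivalence. The only substantive step is the $T$-invariance of $\mathcal M_{r,k}$; everything else is routine topology, so I do not anticipate a real obstacle beyond keeping the ambient spaces straight.
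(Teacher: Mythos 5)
Your argument is a genuinely different route from the paper's: instead of the direct $\epsilon$-ball estimate that the authors use (scaling $\epsilon$ by the smallest and largest row sums), you exhibit the homeomorphism $\psi\colon \Omega \to T\times\Sigma$, $\psi(M)=(D_M,\,D_M^{-1}M)$, observe that it carries $\mathcal V_r\cap\Omega$ and $\mathcal M_{r,k}\cap\Omega$ to the products $T\times(\mathcal V_r\cap\Sigma)$ and $T\times(\mathcal M_{r,k}\cap\Sigma)$, and then invoke $\operatorname{int}_{X\times Y}(A\times B)=\operatorname{int}_X A\times\operatorname{int}_Y B$. The $T$-invariance check is correct, $\psi$ and its inverse are clearly continuous on $\Omega$, the verification $\psi(DN)=(D,N)$ for $N\in\Sigma$ is right, and the passage to subspace topologies is standard. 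This is cleaner and more conceptual than the paper's argument, and it localizes the content of the lemma to the single observation that rank and psd rank are invariant under positive row scaling.

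There is one gap, and it is concentrated in a single parenthetical: $\Omega := \{M\in\R^{p\times q}_{\geq 0}: M\text{ has no zero rows}\}$ is \emph{not} an open subset of $\R^{p\times q}$ --- it lies inside the closed nonnegative orthant, and any matrix with a zero entry is a boundary point of $\Omega$. This matters exactly in your final step, where you need ``$M$ interior to $\mathcal M_{r,k}$ in $\mathcal V_r$'' to be equivalent to ``$M$ interior to $\mathcal M_{r,k}\cap\Omega$ in $\mathcal V_r\cap\Omega$.'' The forward implication always holds, but the reverse needs $\mathcal V_r\cap\Omega$ to be a $\mathcal V_r$-neighborhood of $M$, which fails when $M$ has a zero entry (one can move off inside $\mathcal V_r$ into matrices with negative entries, which never enter $\Omega$). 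The fix that keeps your structure intact is to first note that any $M\in\operatorname{int}_{\mathcal V_r}(\mathcal M_{r,k})$ must be strictly positive, and, for the converse direction, to impose strict positivity as a hypothesis (or argue separately that the right-hand interior also forces strict positivity). To be fair, the paper's own proof glosses over the same point: it asserts $N+B\in R$ without verifying either nonnegativity of $N+B$ or the normalization $(N+B)\mathbf 1=\mathbf 1$, both of which can fail for a generic perturbation $M'=M+A\in\mathcal V_r$. So the defect you inherit is already present in the source; you should simply not claim openness of $\Omega$ and instead restrict to a positive neighborhood of $M$ where the homeomorphism argument is airtight.
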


\begin{proof}
First assume that the rescaled matrix $N$ lies in the interior of $\mathcal M_{r, k} \cap R$, where $R=\{P \in \mathbb R^{p\times q}_{\geq 0} :P\mathbf 1 = \mathbf1\}$. Thus, there exists $\epsilon > 0$ such that $\mathcal B_{\epsilon}(N)\cap\mathcal V_r\cap R \subseteq \mathcal M_{r, k} \cap R$.  Let $\alpha_1,\dots, \alpha_p$ be the row sums of $M$, i.e. $M\mathbf 1 = \alpha$. Without loss of generality, assume that $0<\alpha_1\leq \alpha_2\leq \cdots \leq \alpha_p$. Then, consider the ball $\mathcal B_{\epsilon\alpha_1}(M)$. If a matrix $M' =M + A\in \mathcal B_{\epsilon\alpha_1}(M)\cap \mathcal V_r$, then, after dividing the rows of $M'$ by $\alpha_1,\dots, \alpha_p$ respectively, we obtain the matrix $N + B$, where $B$ is the rescaled version of $A$. Since $\alpha_1\leq \cdots\leq \alpha_p$, then $\Vert B\Vert \leq \frac1{\alpha_1}\Vert A\Vert$. Thus $N+B\in\mathcal B_{\epsilon}(N)\cap\mathcal V_r\cap R\subseteq \mathcal M_{r, k}\cap R$. Since rescaling of the rows by positive numbers does not change the rank or psd rank, we have $M'\in\mathcal M_{r, k}$. Therefore, $\mathcal B_{\epsilon\alpha_1}(M)\cap\mathcal V_r\subseteq \mathcal M_{r, k}$, i.e. $M$ is in the interior of $\mathcal M_{r, k}$.

Now, assume that $M$ lies in the interior of $\mathcal M_{r, k}$. Then, there exists $\epsilon > 0$ such that $\mathcal B_{\epsilon}(M)\cap \mathcal V_{r}\subseteq \mathcal M_{r, k}$. Let $M\mathbf 1 = \alpha$, and assume that $0 < \alpha_1\leq \alpha_1\leq \cdots \leq \alpha_p$. Consider the ball $\mathcal B_{\epsilon / \alpha_p}(N)$. If $N' = N + B \in \mathcal B_{\epsilon / \alpha_p}(N) \cap \mathcal{V}_r \cap R$, then after multiplying the rows of $N'$ by $\alpha_1,\dots, \alpha_p$ respectively we obtain the matrix $M' = M + A$, where $A$ is the rescaled version of $B$, and $||A||\leq \alpha_p ||B||$. Thus, $M'\in\mathcal B_{\epsilon}(M)\cap \mathcal V_r\subseteq \mathcal M_{r,k}$. Since rescaling of the rows by positive numbers does not change the rank or the psd rank, we have $N'\in\mathcal M_{r,k}$. Thus, $\mathcal B_{\epsilon/\alpha_p}(N) \cap \mathcal{V}_r \cap  R \subseteq \mathcal M_{r, k} \cap R$, so $N$ lies in the interior of $\mathcal M_{r, k} \cap R$.
\end{proof}

Lemma \ref{lem:rescale} implies that if we want to study the topology of $\mathcal M_{r, k}$ as a subset of $\mathcal V_r$, we can restrict ourselves to the topology of the space $\mathcal M_{r, k}\cap\{P \in R^{p\times q}_{\geq 0}: P\mathbf 1 = \mathbf 1\}$ as a subset of $\mathcal V_r\cap\{P \in R^{p\times q}_{\geq 0}:P\mathbf 1=\mathbf 1\}$, and Lemma \ref{lem:nest} gives us a recipe for thinking of the elements of this space geometrically.

\subsection{Comparison with nonnegative rank}

Three different versions of nonnegative matrix factorizations appear in the literature: In~\cite{Vavasis09} Vavasis considered the exact nonnegative factorization which asks whether a nonnegative matrix $M$ has a nonnegative factorization of size   equal to its rank. The geometric version of this question asks whether one can nest a simplex between the polytopes $P$ and $Q$. 

In~\cite{GG12} Gillis and Glineur defined restricted nonnegative rank as the minimum value $r$ such that there exist $A \in \mathbb{R}_{\geq 0}^{p \times r}$ and $B \in \mathbb{R}_{\geq 0}^{r \times q}$ with $M=AB$ and $\rank(A)=\rank(M)$.  The geometric interpretation of the restricted nonnegative rank asks for the minimal $r$ such that there exist $r$ points whose convex hull can be nested between $P$ and $Q$.

The geometric version of the nonnegative rank factorization asks for the minimal $r$ such that there exist $r$ points whose convex hull can be nested between an $(r-1)$-dimensional polytope inside an $q$-simplex. These polytopes are not $P$ and $Q$ as defined in this paper. See~\cite[Theorem 3.1]{CR93} for details.

In the psd rank case there is no distinction between the psd rank and the restricted psd rank, because taking an intersection with a subspace does not change the size of a spectrahedral shadow while intersecting a polytope with a subspace can change the number of vertices. Conjecture~\ref{conjecture:spectrahedra_are_enough} also suggests that there is  no distinction between the spectrahedron and the spectrahedral shadow case which we can compare with simplices and polytopes in the nonnegative rank case, or equivalently the exact nonnegative matrix factorization and restricted nonnegative factorization case.

\section{Matrices of rank three and psd rank two}\label{section:rank3psdrank2}\label{sec:psdrank2}

In this section we study the set $\mathcal M_{3, 2}$ of matrices of rank at most three and psd rank at most two. We completely characterize its topological and algebraic boundaries $\partial \mathcal M_{3, 2}$ and $\overline{\partial\mathcal M_{3, 2}}$.

Consider a matrix $M\in\mathbb R^{p\times q}_{\geq 0}$ of rank three. We get a $2$-polytope $P$ and a $2$-polyhedron $Q$ such that $P \subseteq Q \subset \mathbb{R}^2$. Theorem \ref{thm:spectrahedron} now has the following simpler form.

\begin{corollary}[Proposition~4.1 in \cite{GRT}]\label{cor:psd_rank_two} Let $M$ be a nonnegative rank three matrix. Let $P\subseteq Q\subseteq \mathbb R^2$ be 
a polytope and a polyhedron for which $M = S_{P, Q}$. Then
$\rank_{psd}(M) = 2$ if and only if there exists a half-conic such that its convex hull $C$
satisfies $P \subseteq C \subseteq Q$. In particular if $Q$ is bounded, then $\rank_{psd}(M) = 2$  if and
only if we can fit an ellipse between $P$ and $Q$.
\end{corollary}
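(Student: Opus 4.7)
The plan is to specialize Theorem \ref{thm:spectrahedron} to the case $k=2$ and then identify size-$2$ spectrahedral shadows in the plane with convex hulls of half-conics. Since $\rank(M)=3$, the ambient dimension in Theorem \ref{thm:spectrahedron} becomes $r-1=2$, and the general bound $\rank(M)\leq \binom{\rank_{psd}(M)+1}{2}$ forces $\rank_{psd}(M)\geq 2$. Hence $\rank_{psd}(M)=2$ if and only if there exist an affine subspace $L\subseteq \mathcal S^2$ and a linear map $\pi$ with $P\subseteq \pi(L\cap\mathcal S^2_+)\subseteq Q$, i.e.\ if and only if some size-$2$ spectrahedral shadow nests between $P$ and $Q$. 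Everything reduces to showing that the size-$2$ spectrahedral shadows in $\mathbb R^2$ are exactly the convex hulls of half-conics.

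For the classification, parametrize $\mathcal S^2_+$ by $X=\begin{pmatrix}a&b\\ b&c\end{pmatrix}$ with $a,c\geq 0$ and $ac-b^2\geq 0$. This is a $3$-dimensional cone whose extreme rays are the rank-one matrices $vv^T$, traced out by the Veronese map $(v_1,v_2)\mapsto(v_1^2,v_1v_2,v_2^2)$; their image is a conic. For any affine slice $L\cap \mathcal S^2_+$, the extreme points lie on the intersection of this Veronese conic with $L$, which is itself a conic arc, and $L\cap \mathcal S^2_+$ is the convex hull of that arc. Pushing forward by a linear map $\pi$ to $\mathbb R^2$ produces the convex hull of the image of this conic arc, which is again a conic piece (possibly degenerate to a line segment or point); this is exactly what the corollary calls a half-conic $C$. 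Conversely, an arbitrary such half-conic is directly realized as $\pi(L\cap\mathcal S^2_+)$ by working backward from a defining quadratic, proving the equivalence.

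For the final sentence, if $Q$ is bounded then any $C$ with $P\subseteq C\subseteq Q$ is bounded as well. Among convex hulls of half-conics, the only bounded ones are ellipses (with line segments and points regarded as degenerate ellipses), since parabolic and hyperbolic branches have unbounded convex hulls. Conversely, every ellipse arises as a size-$2$ spectrahedron: after an affine change of coordinates, the unit disk equals $\{(b,\tfrac{a-c}{2}) : X\in\mathcal S^2_+,\ \mathrm{trace}(X)=1\}$ up to relabeling. This gives the ellipse-nesting characterization. The main obstacle is the classification step of the second paragraph, namely showing that affine slicing and linear projection interact with the rank-one extreme rays so as to preserve the "conic-arc boundary" structure; this is precisely the content of Proposition~4.1 of \cite{GRT} cited in the statement, which one may alternatively quote outright.
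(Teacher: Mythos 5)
The paper offers no proof of this corollary; it is quoted verbatim from Proposition~4.1 of \cite{GRT}, and you correctly sense this by suggesting at the end that one may ``quote outright.'' So in effect your proposal reproduces the paper's approach. Your reduction via Theorem~\ref{thm:spectrahedron} and the rank lower bound $\rank(M)\leq\binom{k+1}{2}$ (forcing $k\geq 2$) is correct, and the unit-disk realization in the last paragraph checks out (it gives a disk of radius $1/2$, fine up to an affine change). The weak point is the middle paragraph, where you assert that ``pushing forward by a linear map $\pi$ to $\mathbb R^2$ produces the convex hull of the image of this conic arc, which is again a conic piece.'' This is not a triviality: the image of a conic under a non-injective linear map is generally not a conic, and a size-2 spectrahedral shadow can be, for instance, the quadrant $\pi(\mathcal S^2_+)=\{(X_{11},X_{22}):X\succeq 0\}=\mathbb R_{\geq 0}^2$, which is the convex hull of two rays, not of any nondegenerate half-conic. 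Proving that whenever such a degenerate shadow is sandwiched between $P$ and $Q$ one can replace it by (the convex hull of) a genuine ellipse/parabola/hyperbola branch is precisely the nontrivial content of Proposition~4.1 in \cite{GRT}; your sketch glosses over it, though you do flag it as ``the main obstacle'' and defer to the reference, which is what the paper itself does. As a citation-based justification the proposal is fine; as a self-contained proof it has a genuine gap at exactly the step you identified.
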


Half-conics are ellipses, parabolas and connected components of hyperbolas in $\R^2$. If $M\mathbf 1 = \mathbf 1$, then $P$ and $Q$ are bounded and the half-conic in Corollary~\ref{cor:psd_rank_two}  is an ellipse. Using this geometric interpretation of psd rank two, we give a condition on when a matrix $M$ lies in the interior of $\mathcal M_{3, 2}$.

\begin{lemma}\label{continuity_of_factorizations}
Let $M \in \R^{p \times q}$ be such that $M \mathbf 1 =\mathbf 1$ and $\rank(M)=r$. In a small neighborhood of $M$, there exists a continuous map $\mathcal{V}_r \cap \{M \in \R^{p \times q}:M \mathbf 1 = \mathbf 1\} \rightarrow \R^{p \times r} \times \R^{r \times q}, M \mapsto (A,B)$ such that $M=AB$ and the last column of $A$ consists of ones.
\end{lemma}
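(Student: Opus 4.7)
The plan is to explicitly construct the local section by fixing a convenient basis of the column span of $M$ and reading off $A$ and $B$ from submatrices. Since $\rank(M)=r$ and $M\mathbf{1}=\mathbf{1}$, the vector $\mathbf{1}$ lies in the column span of $M$, so we can extend $\{\mathbf{1}\}$ to a basis of this column span using $r-1$ columns of $M$. Pick a set $J \subset \{1,\ldots,q\}$ with $|J|=r-1$ indexing such a choice, and for $N$ in a neighborhood of $M$ with $N\mathbf{1}=\mathbf{1}$ and $\rank(N)\le r$, define
$$A(N) := [\,N_J \mid \mathbf{1}\,] \in \mathbb{R}^{p\times r},$$
where $N_J$ denotes the $p\times(r-1)$ submatrix of $N$ with columns indexed by $J$. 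By construction the last column of $A(N)$ is $\mathbf{1}$, and $A(M)$ has rank $r$, so some $r\times r$ submatrix of $A(M)$ with row index set $I\subset\{1,\ldots,p\}$ is invertible.

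Next I would set $B(N) := A(N)_I^{-1} N_I$, where $A(N)_I$ and $N_I$ denote the submatrices with rows indexed by $I$. Since $N\mapsto A(N)_I$ is linear in the entries of $N$ and the determinant is continuous, $A(N)_I$ remains invertible on a neighborhood of $M$; on this neighborhood matrix inversion is smooth, so $N\mapsto(A(N),B(N))$ is continuous. The last column of $A(N)$ is $\mathbf{1}$ by construction, so the normalization condition is automatic.

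The substantive step is verifying the factorization $A(N)B(N)=N$ on the neighborhood. Every column of $A(N)$ lies in the column span of $N$: the columns $N_j$ for $j\in J$ trivially, and $\mathbf{1} = N\mathbf{1}$ by hypothesis. Hence $\mathrm{colspan}(A(N))\subseteq \mathrm{colspan}(N)$, and since $\rank(A(N))=r$ while $\rank(N)\le r$, both spans have dimension $r$ and coincide (which also forces $\rank(N)=r$ on the neighborhood intersected with $\mathcal V_r$). Therefore there exists a unique $C\in\mathbb R^{r\times q}$ with $N=A(N)C$. Restricting this identity to the rows indexed by $I$ yields $N_I = A(N)_I C$, so $C = A(N)_I^{-1} N_I = B(N)$, and the desired identity $N = A(N)B(N)$ holds.

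The main obstacle is this last verification: the formulas for $A(N)$ and $B(N)$ are natural but only deliver a genuine factorization because we chose the basis of columns so that $\mathbf{1}$ (provided by $N\mathbf{1}=\mathbf{1}$) lies in the column span. Without this hypothesis one could still define $A(N)$ and $B(N)$ by picking $r$ independent columns of $N$, but ensuring that the last column of $A(N)$ is constantly $\mathbf{1}$ requires precisely the normalization condition $N\mathbf{1}=\mathbf{1}$. Everything else is continuity bookkeeping for compositions of polynomial and inversion maps.
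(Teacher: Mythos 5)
Your proof is correct and follows essentially the same route as the paper's: fix $A$ to consist of $r-1$ columns of $M$ together with $\mathbf{1}$, observe that $\mathbf{1}$ enters the column span by the hypothesis $M\mathbf{1}=\mathbf{1}$, then recover $B$ by solving the $r\times r$ square system on a suitable set $I$ of rows, and note that all the ingredients (entries of $N$, determinants, matrix inversion) depend continuously on $N$ on the neighborhood where the chosen rows and columns remain independent. The one improvement you make over the paper's exposition is explicitly checking that the solution of the restricted system $A(N)_I B = N_I$ actually yields a full factorization $A(N)B(N)=N$; this follows from your column-span argument (the span of $A(N)$ lies inside that of $N$, both have dimension $r$ near $M$, so they coincide and the factor $C$ is unique and determined by the rows $I$), whereas the paper states the conclusion without spelling out this step. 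Substantively the argument is the same.
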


\begin{proof}
Let $\rank (M)=r$. Consider the rank-size factorization $M=AB$ where $A$ consists of $r-1$ linearly independent columns of $M$ and the column $\mathbf{1}$ such that $\mathbf{1}$ is not in the column span of the $r-1$ columns. Then the entries of $B$ are solutions of the linear system of equations $AB=M$. In particular, we can choose $r$ linearly independent rows of $M$ and write down the square system corresponding to the rows. Then each entry of $B$ is of the form $\frac{\det(\cdot)}{\det(\cdot)}$, where the upper determinant is in the entries of $A,M$ and the lower determinant is in the entries of $A$. However, the entries of $A$ are also entries of $M$. Hence, we have constructed a map that is continuous in the neighborhood of $M$ where the set of linearly independent columns and rows used for constructing $A$ and $B$ remain linearly independent.
\end{proof}

\begin{lemma}\label{lem:interior} Let $M$ be a nonnegative matrix of rank three satisfying $M \mathbf 1 =\mathbf 1$ such that there exist nested polytopes $P$ and $Q$ for which $M = S_{P, Q}$. Then  $M$ lies in the interior of $\mathcal M_{3, 2}$ if and only if there exists a region $E$ bounded by an ellipse such that $P\subset E\subset Q$ and the boundary of $E$ does not contain any vertices of $P$.
\end{lemma}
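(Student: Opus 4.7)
The plan hinges on Corollary~\ref{cor:psd_rank_two}, which converts membership in $\mathcal M_{3,2}$ into the existence of an ellipse $E$ with $P\subseteq E\subseteq Q$ (the half-conic is an ellipse here since $Q$ is a polytope). A preliminary observation I would record is that, given $P\subseteq E$, the condition ``$\partial E$ contains no vertex of $P$'' is equivalent to the strict containment $P\subseteq\mathrm{int}(E)$: every vertex lies in $\mathrm{int}(E)$, and $\mathrm{int}(E)$ is convex, so every convex combination of vertices does too. Moreover $E\subseteq Q$ always implies $\mathrm{int}(E)\subseteq\mathrm{int}(Q)$. So the goal becomes showing that $M$ lies in the interior of $\mathcal M_{3,2}$ if and only if there is an ellipse $E$ with $P\subseteq\mathrm{int}(E)$ and $E\subseteq Q$.

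For the ``if'' direction I would first shrink $E$ by a small homothety toward its center to obtain $E'\subseteq\mathrm{int}(E)$ still satisfying $P\subseteq\mathrm{int}(E')$; then $E'\subseteq\mathrm{int}(Q)$ as well. For rank-three $M'$ close to $M$ in $\mathcal V_3\cap\{N:N\mathbf 1=\mathbf 1\}$, Lemma~\ref{continuity_of_factorizations} supplies polytopes $P',Q'$ close to $P,Q$, and continuity gives $P'\subseteq E'\subseteq Q'$ for $M'$ close enough, so $M'\in\mathcal M_{3,2}$ by Corollary~\ref{cor:psd_rank_two}. Perturbations $M'$ of rank less than three are automatically in $\mathcal M_{3,2}$, since a nonnegative matrix of rank at most two has nonnegative rank at most two and therefore psd rank at most two. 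Lemma~\ref{lem:rescale} then transfers the conclusion back to the interior of $\mathcal M_{3,2}$ without the row-sum restriction.

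For the ``only if'' direction, assume $M\in\mathrm{int}(\mathcal M_{3,2})$. I would first argue that $M$ must have strictly positive entries: any vanishing $M_{ij}$ could be decreased along some tangent direction of $\mathcal V_3\cap\{N:N\mathbf 1=\mathbf 1\}$, exiting $\mathcal M_{3,2}$. This forces $P\subseteq\mathrm{int}(Q)$. Write $M=AB$ with the last column of $A$ equal to $\mathbf 1$, and for each row $a_i\in\mathbb R^2$ of $A$ (with last entry removed) set $a_i^{(\epsilon)}=a_i+\epsilon(a_i-c)$ where $c=\frac{1}{p}\sum_j a_j$ is the centroid of $P$. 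Let $A_\epsilon$ be the matrix with rows $((a_i^{(\epsilon)})^T,1)$ and put $M_\epsilon=A_\epsilon B$. Using $B\mathbf 1=(0,0,1)^T$ (forced by $AB\mathbf 1=\mathbf 1$ together with the full column rank of $A$), a short check gives $M_\epsilon\mathbf 1=\mathbf 1$ and $\|M_\epsilon-M\|=O(\epsilon)$. The polygon $P_\epsilon=\conv(a_1^{(\epsilon)},\dots,a_p^{(\epsilon)})$ is the homothety of $P$ about $c$ with ratio $1+\epsilon$, so $P\subseteq\mathrm{int}(P_\epsilon)$, and $P\subseteq\mathrm{int}(Q)$ yields $P_\epsilon\subseteq Q$ for small $\epsilon$. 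The interior hypothesis then gives $M_\epsilon\in\mathcal M_{3,2}$ for small $\epsilon$, and Corollary~\ref{cor:psd_rank_two} produces an ellipse $E_\epsilon$ with $P_\epsilon\subseteq E_\epsilon\subseteq Q$. Since $P\subseteq\mathrm{int}(P_\epsilon)\subseteq\mathrm{int}(E_\epsilon)$, this $E_\epsilon$ is the desired ellipse.

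The hard part is the ``only if'' direction: rather than simply extracting some ellipse between $P$ and $Q$ from $M\in\mathcal M_{3,2}$, one must produce an ellipse whose interior contains $P$. The device of enlarging $P$ homothetically by perturbing only the first two columns of $A$ while leaving $B$ (and thus $Q$) untouched is what lets me convert the abstract openness of $\mathcal M_{3,2}$ at $M$ into a concrete ellipse with $P\subseteq\mathrm{int}(E)$. The technical points that need verification are that this perturbation stays inside $\mathcal V_3\cap\{N:N\mathbf 1=\mathbf 1\}$ and that $P_\epsilon\subseteq Q$ for small $\epsilon$, the latter being precisely where the preliminary argument that $M>0$ is needed.
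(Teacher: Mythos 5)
Your proposal is correct and follows essentially the same route as the paper: both directions rest on Corollary~\ref{cor:psd_rank_two} together with Lemma~\ref{continuity_of_factorizations} (shrinking the ellipse off $P$ and $Q$ for the ``if'' direction, and for the ``only if'' direction applying the interior hypothesis to the slack matrix of a homothetic enlargement $tP\subseteq Q$ to extract an ellipse strictly containing $P$), with Lemma~\ref{lem:rescale} handling the reduction to the row-stochastic slice. Your write-up is a bit more explicit in a few places---verifying $M_\epsilon\mathbf 1=\mathbf 1$, treating rank-deficient perturbations, and recasting the vertex condition as $P\subseteq\mathrm{int}(E)$---but these are refinements of, not departures from, the published argument.
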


\begin{proof}
 By Lemma~\ref{lem:rescale}, we may assume throughout the proof that $M \mathbf 1=\mathbf 1$ and hence $P \subseteq Q$ are bounded. Abusing the terminology, we will call the region bounded by an ellipse an ellipse in this proof.

Assume first that $M$ lies in the interior of $\mathcal M_{3, 2}$. By Lemma~\ref{lem:nest} and Corollary~\ref{cor:psd_rank_two} there exists an ellipse $E$ such that $P\subseteq E\subseteq Q$. If the boundary of $E$ does not contain any vertices of $P$, then we are done. Suppose that the boundary of $E$ contains some vertices of $P$. We are going to find another ellipse $E'$ such that $P\subset  E'\subset Q$ and the boundary of $E'$ does not contain any vertices of $P$. 

Since $M$ is in the interior of $\mathcal M_{3, 2}$, none of the entries of $M$ are 0, so the boundary of the polygon $Q$ does not contain any vertices of $P$. Moreover, there exists $\epsilon > 0$ such that $\mathcal V_3\cap \mathcal B_{\epsilon}(M)\subset \mathcal M_{3, 2}$. Pick a point in the interior of the polygon $P$ and consider the polygon $tP$ obtained by a homothety centered at the selected point with some $t>1$. Then, $P\subset tP\subseteq Q$ for a small enough $t>1$, and $P$ is strictly contained in $tP$. Now consider the generalized slack matrix of $tP$ and $Q$ and call it $M_t$. We can choose $t$ close enough to 1 so that $M_t\in\mathcal B_{\epsilon}(M)\subseteq \mathcal M_{3,2}$. Thus, $M_t$ has psd rank at most two and there exists an ellipse $E'$ such that $tP\subset E'\subset Q$. Therefore $P \subset tP \subset E'\subset Q$ and the boundary of the ellipse $E'$ does not contain any vertices of $P$.
 
Now suppose that there exists an ellipse $E$ and polygons $P$ and $Q$ such that $P\subset E\subset Q$ and the ellipse $E$ does not contain any vertices of $P$. It is possible to shrink the ellipse $E$ slightly so that it also does not touch any edges of $Q$ either. We obtain an ellipse $E'$ that does not touch any vertices of $P$ and does not touch any edges of $Q$. By Lemma~\ref{continuity_of_factorizations}, for any matrix $M'\in\mathcal B_{\epsilon}(M)\cap\mathcal V_3 \cap \{M \in \R^{p \times q}:M\mathbf 1 = \mathbf 1\}$ we obtain polyhedra that are small perturbations of $P$ and $Q$ and hence $E'$ is nested between them. Therefore, $M'\in\mathcal M_{3, 2}$ and so $\mathcal B_{\epsilon}(M)\cap\mathcal V_3 \cap \{M \in \R^{p \times q}:M\mathbf 1 = \mathbf 1\} \subseteq \mathcal M_{3,2}$.
\end{proof}

We can now show how $\mathcal M_{3, 2}$ relates to the variety $\mathcal V_3$.
\begin{proposition}
The Zariski closure of $\mathcal M_{3,2}^{p\times q}$ over the real numbers is  $\mathcal{V}^{p \times q}_3$.
\end{proposition}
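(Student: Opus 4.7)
The plan is to exhibit a nonempty Euclidean-open subset of $\mathcal V_3^{p\times q}$ that lies inside $\mathcal M_{3,2}^{p\times q}$ and then invoke irreducibility of the determinantal variety $\mathcal V_3$ to upgrade this to Zariski density. The degenerate cases $\min(p,q)\le 2$ are immediate: there $\mathcal V_3^{p\times q}=\mathbb R^{p\times q}$, every nonnegative matrix has rank at most two and therefore psd rank at most two, and $\mathbb R_{\ge 0}^{p\times q}$ is Zariski dense in $\mathbb R^{p\times q}$. So I would focus on $p,q\ge 3$.

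To build the interior point I would use the geometric dictionary from Section~\ref{sec:preliminaries}. Take $E$ to be the unit disk, $P$ a regular $p$-gon inscribed in the circle of radius $\tfrac12$, and $Q$ a regular $q$-gon circumscribed about the circle of radius $2$, so that $P\subsetneq E\subsetneq Q$ and $\partial E$ contains none of the vertices of $P$. For the generalized slack matrix $M=S_{P,Q}$ with the symmetric halfspace representation of $Q$, the factorization of $M_{ij}=z_j-h_j^T v_i$ through $\mathbb R^3$ forces $\rank(M)=3$, and Corollary~\ref{cor:psd_rank_two} gives $\rank_{\mathrm{psd}}(M)\le 2$. By symmetry all rows of $M$ have equal sum, so a single scalar rescaling of the halfspace representation of $Q$ gives a generalized slack matrix of the same pair $(P,Q)$ with $M\mathbf 1=\mathbf 1$; Lemma~\ref{lem:interior} then places this $M$ in the interior of $\mathcal M_{3,2}\cap\{N:N\mathbf 1=\mathbf 1\}$ relative to $\mathcal V_3\cap\{N:N\mathbf 1=\mathbf 1\}$, and Lemma~\ref{lem:rescale} transfers this conclusion to the interior of $\mathcal M_{3,2}^{p\times q}$ in $\mathcal V_3^{p\times q}$.

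Finally, $\mathcal V_3^{p\times q}$ is an irreducible real algebraic variety: its complexification is the classical irreducible determinantal variety, and the polynomial parameterization $(A,B)\mapsto AB$ from $\mathbb R^{p\times 3}\times\mathbb R^{3\times q}$ to $\mathcal V_3^{p\times q}(\mathbb R)$ shows that the real locus is Zariski dense in the complexification. Any nonempty Euclidean-open subset of $\mathcal V_3^{p\times q}$ is therefore Zariski dense, and applying this to the neighborhood of $M$ just produced yields that the Zariski closure of $\mathcal M_{3,2}^{p\times q}$ equals $\mathcal V_3^{p\times q}$. The step I expect to require the most care is ensuring that the normalization to $M\mathbf 1=\mathbf 1$ is compatible with the hypotheses of Lemma~\ref{lem:interior}, so that the same ellipse $E$ continues to witness the nesting; the remaining pieces—producing the nested triple $(P,E,Q)$, the rank computation, and the passage from Euclidean open sets to Zariski density via irreducibility—are all essentially routine.
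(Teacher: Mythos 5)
Your proposal is correct and follows essentially the same strategy as the paper's proof: construct a concrete pair of regular polygons with a circle strictly nested between them and not touching the vertices of $P$, apply Lemmas~\ref{lem:interior} and~\ref{lem:rescale} to get a Euclidean-open subset of $\mathcal V_3^{p\times q}$ inside $\mathcal M_{3,2}^{p\times q}$, and then invoke irreducibility of the determinantal variety $\mathcal V_3$ to conclude Zariski density. The only cosmetic differences are your explicit treatment of the degenerate case $\min(p,q)\le 2$, the particular radii chosen, and your slightly more careful remarks about real versus complex irreducibility.
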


\begin{proof}
Suppose that there exists a ball $\mathcal B\subseteq \mathbb R^{p\times q}$ such that $\mathcal B\cap \mathcal{V}_3 \subseteq \mathcal M_{3, 2}$. This implies that the dimension of $\mathcal M_{3,2}^{p\times q}$ is equal to that of $\mathcal{V}_3$, and since $\mathcal M_{3,2} \subset \mathcal{V}_3$ and $\mathcal{V}_3$ is irreducible \cite[Theorem 2.10]{bruns1988determinantal}, the Zariski closure of $\mathcal M_{3, 2}$ over the real numbers equals $\mathcal{V}_3$.

We show how to find such a ball $\mathcal B$. By Lemmas~\ref{continuity_of_factorizations} and~\ref{lem:interior}, it would suffice to find nested polygons $P\subseteq Q\subseteq \mathbb R^2$ such that $P$ has $p$ vertices, $Q$ has $q$ edges and there exists an ellipse nested between them that does not touch the vertices of $P$. Such a configuration certainly exists, for example, we can consider a regular $p$-gon $P$ centered at the origin with length $1$ from the origin to any of its vertices, and a regular $q$-gon $Q$ centered at the origin with length $5$ from the origin to any of its edges. Then, we can fit a circle of radius $2$ and center the origin between $P$ and $Q$ so that it does not touch the vertices of $P$.
\end{proof}

\begin{remark}
The set of $p \times q$ matrices of psd rank at most $k$ is connected as it is the image under the parametrization map of the connected set $(\mathcal{S}^k_+)^p \times (\mathcal{S}^k_+)^q$. If we also fix the rank, then it is not known if the corresponding set is connected.
\end{remark}

The following theorem is the main result of this section.
 
\begin{theorem}\label{main_theorem}
We describe the topological and algebraic boundaries of $\mathcal M_{3,2}^{p\times q}$.
\begin{enumerate}
\item[a.] A matrix $M\in \mathcal M_{3,2}^{p\times q}$ satisfying $M \mathbf 1=\mathbf 1$  lies on the topological boundary $\partial\mathcal M_{3,2}^{p\times q}$ if and only if $M_{ij}=0$ for some $i,j$, or each ellipse  that fits between the polygons $P$ and $Q$ contains at least three vertices of the inner polygon $P$ and is tangent to at least three edges of the outer polygon $Q$.
\item[b.] A matrix $M\in\overline{\mathcal M_{3, 2}^{p\times q}} = \mathcal V_3^{p\times q}$ satisfying $M \mathbf 1=\mathbf 1$ lies on the algebraic boundary $\overline{\partial\mathcal M_{3, 2}^{p\times q}}$ if and only if $M_{ij} = 0$ for some $i,j$ or there exists an ellipse that contains at least three vertices of $P$ and is tangent to at least three edges of $Q$.
\item[c.] The algebraic boundary of $\mathcal M_{3, 2}^{p\times q}$ is the union of $\binom p3\binom q3+pq$ irreducible components. Besides the $pq$ components $M_{ij}=0$, there are $\binom p3\binom q3$ components each of which is defined by the $4\times 4$ minors of $M$ and one additional polynomial equation with $1035$ terms homogeneous of degree $24$ in the entries of $M$ and homogeneous of degree $8$ in each row and each column of a $3\times 3$ submatrix of $M$.
\end{enumerate}
\end{theorem}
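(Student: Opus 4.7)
The plan is to prove the three parts in turn, using the geometric criterion of Lemma~\ref{lem:interior} together with a perturbation argument in the $5$-parameter space of conics. By Lemmas~\ref{lem:rescale} and~\ref{lem:nest} I reduce throughout to matrices with $M\mathbf 1 = \mathbf 1$ and take nested polygons $P\subseteq Q\subset \R^2$ with $M = S_{P,Q}$. The case $M_{ij}=0$ is disposed of immediately: perturbing $M_{ij}$ to a negative value keeps the matrix in $\mathcal{V}_3$ but leaves $\mathcal{M}_{3,2}$, so $M\in\partial\mathcal{M}_{3,2}$, and the $pq$ hyperplane components $\{M_{ij}=0\}\cap \mathcal{V}_3$ enter the decomposition in (c).

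For (a) with $M_{ij}>0$, Lemma~\ref{lem:interior} reduces the question to upgrading ``every ellipse $E$ with $P\subseteq E\subseteq Q$ touches some vertex of $P$'' to ``every such $E$ touches at least three vertices of $P$ and at least three edges of $Q$.'' The contrapositive is the real content: given an ellipse $E = \{f\leq 0\}$ with $k$ vertex contacts $v_1,\dots,v_k$ and $l$ edge tangencies at points $p_1,\dots,p_l$ where $k\leq 2$ or $l\leq 2$, I will produce a nearby ellipse $E'$ with $P\subseteq E'\subseteq Q$ whose boundary misses every vertex of $P$. Writing $f' = f + \epsilon h$ and linearizing the inclusions at the contact points, $E'$ works provided the quadratic $h$ satisfies $h(v_i)<0$ for each $i$ and $h(p_j)>0$ for each $j$ -- a system of $k+l$ strict linear inequalities on the $5$-dimensional tangent space of conics at $E$ (quadratics modulo $\R f$). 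By Gordan's theorem of the alternative, feasibility is equivalent to the non-existence of a non-trivial nonnegative linear combination of the evaluation functionals $h\mapsto h(v_i)$ and $h\mapsto -h(p_j)$ summing to zero. For $k+l\leq 5$, these evaluations are linearly independent on the tangent space, so feasibility is automatic. For $k+l = 6$, the six contact points all lie on $\partial E$, producing a unique linear dependence whose sign pattern alternates with the cyclic order on $\partial E$ (a standard Vandermonde/rational-normal-curve calculation). A nonnegative dependence with the required signs exists if and only if the vertex contacts and tangency points strictly alternate on $\partial E$, which forces $k=l=3$. In every other case the perturbation $E'$ is produced, completing (a).

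For (b) and (c), the key observation is that away from the coordinate hyperplanes the condition of (a) Zariski-closes to ``for some choice of three vertices of $P$ and three edges of $Q$, there exists an ellipse passing through those vertices and tangent to those edges.'' Since three point-incidences plus three line-tangencies impose six conditions on the $5$-parameter family of conics, this existence is cut out by a single polynomial equation in the contact data; via the slack-matrix parametrization of Section~\ref{sec:preliminaries} it becomes a polynomial in the entries of the chosen $3\times 3$ submatrix of $M$, giving (b). For (c) each of the $\binom p3\binom q3$ choices of three rows and three columns contributes one irreducible component, cut out together with the $4\times 4$ minors of $M$ defining $\mathcal{V}_3$, and combined with the $pq$ coordinate hyperplanes this exhausts the algebraic boundary. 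The explicit polynomial of degree $24$ with $1035$ terms (degree $8$ in each row and each column of the $3\times 3$ block) is produced by symbolic elimination: parametrize a general conic by its six coefficients, write the three point-incidence and three tangency equations, eliminate the conic coefficients via resultants, and substitute the contact data as functions of the entries of $M$.

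The main obstacle is the sign-pattern analysis in the $k+l=6$ case. One must establish that the unique linear dependence among six evaluation functionals at points on a plane conic has strictly alternating signs in cyclic order on the conic, and then verify that the convex inclusions $P\subseteq E\subseteq Q$ force the vertex contacts and tangency points into a cyclic order on $\partial E$ compatible with a nonnegative dependence only when $k=l=3$ and the $v_i, p_j$ strictly interleave. This geometric-combinatorial input is what locks in the $3+3$ count; everything else is Lemma~\ref{lem:interior}, a routine application of Gordan's alternative, or a computer-algebra elimination.
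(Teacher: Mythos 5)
Your approach to part (a) is genuinely different from the paper's and deserves comparison. The paper proves the contrapositive of the ``only if'' direction by reducing to the case of at most two tangent edges of $Q$, applying a projective transformation that puts the two tangent edges at $x=0$, $y=0$ with fixed tangency points, writing the conic explicitly as $x^2+bxy+y^2-2xz-2yz+z^2=0$, and then decreasing $b$ to enlarge the conic while preserving both tangencies; the dual configuration handles at most two vertex contacts. You instead work in the $5$-dimensional tangent space of conics at $E$ (quadratics mod $\R f$), impose linear sign conditions at the contact points, and invoke Gordan's alternative. This is a legitimate and arguably more conceptual route, and for $k+l\le 6$ your analysis is correct: the evaluation functionals at $\le 5$ points on $\partial E$ are independent, and at $6$ points the unique relation has alternating signs (the Chebyshev/rational-normal-curve fact), which can match the required orthant only if $k=l=3$ and the $v_i,p_j$ strictly interleave.

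The gap is the case $k+l\ge 7$, which your writeup simply does not address but which genuinely occurs in the contrapositive (e.g.\ $k$ large and $l\le 2$: all vertices of $P$ inscribed in $E$ while only two edges of $Q$ are tangent). There the space of relations among the evaluation functionals is $(k+l-5)$-dimensional, and the ``unique alternating dependence'' argument no longer applies; you would have to rule out the existence of \emph{some} relation in that higher-dimensional space with the forbidden sign pattern. The clean way to close this gap inside your framework is to observe that restriction to $\partial E$ identifies quadratics mod $\R f$ with binary quartics on $\mathbb{RP}^1$, that a real binary quartic changes sign at most four times around $\mathbb{RP}^1$, and conversely that any cyclic sign pattern with at most four sign changes is realized by a product of linear forms; since $\min(k,l)\le 2$ forces the contact points to split into at most two $v$-blocks and two $p$-blocks (hence at most four sign changes), feasibility follows uniformly. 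Without some such argument, ``in every other case the perturbation $E'$ is produced'' is an unsupported assertion. The paper sidesteps this entirely: its explicit perturbation keeps both tangencies and works for any number of vertex contacts, so no case analysis in $k+l$ is needed.

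One smaller remark: your stated ``geometric-combinatorial input'' (that the convex inclusions force alternation when $k=l=3$) is not actually needed for the contrapositive; all you need is that under the hypothesis $k\le 2$ or $l\le 2$ the forbidden nonnegative dependence cannot exist, which is weaker and which your sign analysis already gives for $k+l=6$. For parts (b) and (c) your sketch matches the paper's strategy (eliminate the conic via resultants of the three incidence and three tangency conditions, then express the result in the entries of the $3\times 3$ submatrix), though the paper additionally uses the First Fundamental Theorem for $\mathrm{GL}_3$ to justify that the resultant, computed when $[a,b,c]$ is the identity, extends to the general case by substituting the entries of $[a,b,c]^T[d,e,f]$; your ``substitute the contact data as functions of the entries of $M$'' glosses over this step.
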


\begin{proof}
Let $\tilde{P}$ and $\tilde{Q}$ be the projective completions of $\text{cone}(P \times \{1\})$ and $\text{cone}(Q \times \{1\})$, i.e. the closures of images of $\text{cone}(P \times \{1\})-\{0\}$ and $\text{cone}(Q \times \{1\})-\{0\}$ under the map $\R^3 \rightarrow \mathbb P^2, (x,y,z) \mapsto [x:y:z]$. In \cite{Gallier}, $\tilde{P}$ and $\tilde{Q}$ are called projective polyhedra. If $P$ and $Q$ are bounded, there is no need to take closure. Hence, in this case there is one-to-one correspondence between statements about incidence relations in the affine and projective case.  In Section~\ref{sec:preliminaries}, we required $A$ to have rows $A_i=(a_i^T,1)$ and defined $P=\text{conv}(a_1,\ldots,a_p)$. Similarly, the last row of $B$ gave constant terms of inequalities defining $Q$. Thus $\text{cone}(P \times \{1\})$ is the cone over the rows of $A$ and $\text{cone}(Q \times \{1\})=\{x \in \R^3:x^T B \geq 0\}$. This allows us to define $\tilde{P}$ and $\tilde{Q}$ for general $M$ (even if $\mathbf 1$ is not in the column span of $M$). Since in the projective plane all non-degenerate conics are equivalent, we will use the word ``conic" instead of ``ellipse". Abusing the terminology, we will also call the region bounded by a nondegenerate conic a conic in this proof. The region bounded by a nondegenerate conic is determined by the region bounded by the corresponding double cone in $\R^3$.

$(a)$ 
\underline{Only if:} We show the contrapositive of the statement: If all the entries of $M$ satisfying $M \mathbf 1=\mathbf 1$ are positive and there is a conic between $\tilde P$ and $\tilde Q$ whose boundary contains at most two vertices of $\tilde P$ or is tangent to at most two edges of $\tilde Q$, then $M$ lies in the interior of $M_{3,2}^{p\times q}$.

First, if there is a conic $E$ between $\tilde P$ and $\tilde Q$ whose boundary touches neither of the polytopes, then $M$ is in the interior of $\mathcal M_{3,2}$ by Lemma \ref{lem:interior}.
If at most two edges of $\tilde Q$ are tangent to the boundary of the conic $E$, then $\tilde P \subset E \subset \tilde Q$ can be transformed by a projective transformation such that the two tangent edges are $x = 0$ and $y = 0$ and that the points of tangency are $[0:1:1]$ and $[1:0:1]$. We denote the image of $E$ by $\overline{E}$.
The equation of the conic $\overline{E}$ has the form $ax^2 + bxy + cy^2 + dxz+eyz+fz^2 = 0$. We know that the only point that lies on the conic $\overline{E}$ with $x=0$ is the point $[0:1:1]$ since $\overline{E}$ touches the line $x=0$ at $[0:1:1]$. If we plug in $x=0$, we get
$$cy^2 + eyz+fz^2 = 0.$$
We may assume $c \geq 0$, hence we must have $cy^2 + eyz + fz^2 = (y-z)^2$.  Therefore, $c=1, e=-2, f=1$. Similarly, since $\overline{E}$ touches the line $y=0$ at $[1:0:1]$, when we plug in $y=0$, we get that $ax^2 + dxz+fz^2 = (x-1)^2$, so, $a=1, d=-2, f=1$. Thus, the conic $\overline{E}$ has the form
$$\{(x, y): x^2 + bxy + y^2 -2xz-2yz + z^2 = 0\},$$
for some $b$. The conic is degenerate if and only if $b=2$. Since $E$ is nondegenerate, also $\overline{E}$ is nondegenerate. The double cone corresponding to $\overline{E}$ in $\R^3$ is defined by $x^2 + bxy + y^2 -2xz-2yz + z^2 \leq 0$. Since $x=0$ and $y=0$ are tangent to this double cone and touch it at the points $(0,1,1)$ and $(1,0,1)$, for all nonzero $x$ and $y$ we have $xy >0$. This corresponds to $b<2$. For a slightly smaller value of $b$, we obtain a slightly larger double cone. The nondegenerate conic $\overline{E}' \subseteq \mathbb{P}^2$ corresponding to this double cone contains $\overline{E}$ and touches $\overline{E}$ only at the points $[1:0:1]$ and $[0:1:1]$. Let $E'$ be the preimage of $\overline{E}'$ under the projective transformation considered above. We have $\tilde P\subseteq E\subset E'\subseteq \tilde Q$ and the conic $E'$ does not touch $\tilde P$. Thus, by Lemma \ref{lem:interior}, $M$ lies in the interior of $\mathcal M_{3,2}$. The case when $E$ goes through at most two vertices of $\tilde P$ follows by duality. 

\underline{If:} By Lemma \ref{lem:interior}, if $M\in \mathcal M_{3,2}$ satisfying $M \mathbf 1=\mathbf 1$ lies in the interior, then there is a conic between $\tilde P$ and $\tilde Q$ that does not touch $\tilde P$. Thus, if every conic nested between $\tilde P$ and $\tilde Q$ contains at least three vertices of $\tilde P$ and touches at least three edges of $\tilde Q$, then $M$ lies on the boundary $\partial\mathcal M_{3, 2}$

 $(b),(c)$ 
If $M \in \mathbb{R}^{p \times q}$ without nonnegativity constraints satisfies $M \mathbf 1=\mathbf 1$, then one can define polytopes $P$ and $Q$ as explained before Lemma~\ref{lem:nest}. The difference is that $P \subseteq Q$ does not hold anymore, and we also might not have $\tilde P \subseteq \tilde Q$. Nevertheless, one can talk about vertices of $\tilde P$ and edges of $\tilde Q$.
Hence given three points $a,b,c$ in $\mathbb{P}^2$
and three lines $d,e,f$ in $\mathbb{P}^2$, each given by three homogeneous
coordinates, we seek the condition that there exists a conic $X$
such that $a,b,c$ lie on $X$ and $d,e,f$ are tangent to $X$.

Let $X=\begin{bmatrix} x_{11} & x_{12} & x_{13} \\ x_{12} & x_{22} & x_{23} \\ x_{13} & x_{23} & x_{33} \end{bmatrix}$ be the matrix of a conic.  Then the corresponding conic goes through the points $a,b,c$ if and only if 
\begin{align}\label{equations_for_points}
a^TXa=b^TXb=c^TXc=0.
\end{align}
 Similarly, the lines $d,e,f$ are tangent to the conic if and only 
\begin{align}\label{equations_for_lines}
d^TYd=e^TYe=f^TYf=0,
\end{align}
 where $XY=I_3$. We seek to eliminate the variables $X$ and $Y$. 

Let $[a,b,c]$ denote the matrix whose columns are $a,b,c$. First we assume that $[a,b,c]$  is the $3 \times 3$-identity matrix.
Then we proceed in two steps:

1) The equations (\ref{equations_for_points}) imply that $x_{11},x_{22},x_{33}$ are zero. We make the corresponding replacements in equations~(\ref{equations_for_lines}).

2) We use~\cite[formula (4.5) on page 48]{Sturmfels02} to get the resultant of three ternary quadrics to get a single polynomial in the entries of $d,e,f$.

Now we use invariant theory to obtain the desired polynomial in the general case. Let $g \in \textrm{GL}_3(\mathbb{R})$. The conic $X$ goes through the points $a,b,c$ and touches the lines $d,e,f$ if and only if the conic $g^{-T}Xg^{-1}$ goes through the points $ga,gb,gc$ and touches the lines $g^{-T}d,g^{-T}e,g^{-T}f$. Thus our desired polynomial belongs to the ring of invariants $\mathbb{R}[V^3 \oplus V^{*3}]^{\textrm{GL}_3(\mathbb{R})}$ where $V=\mathbb{R}^3$ and the action of $\textrm{GL}_3(\mathbb{R})$ on $V^3 \oplus V^{*3}$ is given by
$$g\cdot(a,b,c,d,e,f):=(ga,gb,gc,g^{-T}d,g^{-T}e,g^{-T}f).$$
 The First Fundamental Theorem states that $\mathbb{R}[V^3 \oplus V^{*3}]^{\textrm{GL}_3(\mathbb{R})}$ is generated by the bilinear functions $(i|j)$ on $V^3 \oplus  V^{*3}$ defined by 
$$(i|j):(a,b,c,d,e,f) \mapsto ([a,b,c]^T [d,e,f])_{ij}.$$ 
For the FFT see for example~\cite[Chapter 2.1]{KP}. In the special case when $[a,b,c]$ is the $3 \times 3$ identity matrix, $(i|j)$ maps to the $(i,j)$-th entry of $[d,e,f]$. Hence to obtain the desired polynomial in the general case, we replace in the resultant obtained in the special case the entries of the matrix $[d,e,f]$ by the entries of the matrix $[a,b,c]^T [d,e,f]$. 

\texttt{Maple} code for doing the steps in the previous paragraphs can be found at our website. This program outputs one polynomial of degree $1035$ homogeneous of degree $8$ in each of the rows and the columns of the matrix $\begin{bmatrix}-&a&-\\-&b&-\\-&c&-\end{bmatrix}\begin{bmatrix}| & | & |\\d & e & f \\| & | & |\end{bmatrix}$.  By construction, if this homogeneous polynomial vanishes and the projective polyhedron $\tilde P$ with vertices $a,b,c$ lies inside the projective polyhedron $\tilde Q$ with edges $d,e,f$ and $a,b,c,d,e,f$ are real, then there exists a conic nested between $\tilde P$ and $\tilde Q$ touching $d,e,f$ and containing $a,b,c$. Therefore, the Zariski closure of the condition that the only possible conics that can fit between $\tilde P$ and $\tilde Q$ touch at least three edges of $\tilde Q$ and at least three vertices of $\tilde P$ is exactly that there exists an conic that touches at least three edges of $\tilde Q$ and at least three vertices of $\tilde P$. none of the vertices of $\tilde P$ has the last coordinate equal to zero This proves $(b)$. 

To prove $(c)$, let $M\in\mathcal V_3$ be such that $M = A B$ and $a, b, c$ are three of the rows of $A$ and $d, e, f$ are three of the columns of $B$.  Then, the above-computed polynomial contains variables only from the entries of a $3\times 3$ submatrix of $M$ corresponding to these rows and columns. We can drop the assumption $M \mathbf 1 =\mathbf 1$ here: Scaling a row of $M$ by a constant corresponds to scaling the corresponding row of $A$ by the same constant, which does not influence equations~(\ref{equations_for_points}). For each three rows and three columns of $M$ we have one such polynomial, so the algebraic boundary is given by the union over each three rows and three columns of $M$ of the variety defined by the $4\times 4$ minors of $M$ and the corresponding degree $24$ polynomial with $1035$ terms.
\end{proof}

Here is an algebraic version of Theorem \ref{main_theorem}.

\begin{corollary} \label{cor:algebraic} A matrix $M\in\mathbb R^{p\times q}_{\geq 0}$ satisfying $M \mathbf 1=\mathbf1$ lies on the boundary $\partial \mathcal M_{3, 2}$ if and only if for every size-2 psd factorization $M_{ij} = \langle A_i, B_j\rangle$, at least three of the matrices $A_1,\dots, A_p\in\mathcal S^2_+$ have rank one and at least three of the matrices $B_1,\dots, B_q\in\mathcal S^2_+$ have rank one.
\end{corollary}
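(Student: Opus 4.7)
The idea is to translate Theorem \ref{main_theorem}(a) into algebraic language via a dictionary between size-$2$ psd factorizations of $M$ and ellipses nested between $P$ and $Q$. Under this dictionary, I want to show that $A_i$ has rank one exactly when the vertex $v_i$ of $P$ lies on the boundary of the associated ellipse $E$, and that $B_j$ has rank one exactly when the edge $e_j$ of $Q$ is tangent to $E$. Once these two translations are in place, the corollary follows by plugging them into Theorem \ref{main_theorem}(a).

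To set up the dictionary, I would start from a size-$2$ factorization $M_{ij}=\langle A_i,B_j\rangle$ with $M\mathbf{1}=\mathbf{1}$ and set $C:=\sum_j B_j\in\mathcal{S}^2_+$. Then $\langle A_i,C\rangle=\sum_j M_{ij}=1$ for every $i$, so all $A_i$ lie on the affine hyperplane $L=\{X\in\mathcal{S}^2:\langle X,C\rangle=1\}$. Because $\mathrm{rank}(M)=3$, the matrices $\{B_j\}$ span $\mathcal{S}^2$, which forces $C$ to be positive definite, and consequently $L\cap\mathcal{S}^2_+$ is a compact planar region whose boundary is an ellipse. Under the affine isomorphism $\pi\colon L\to\mathbb{R}^2$ compatible with the parametrization producing the polygons $P$ and $Q$ in Theorem \ref{thm:spectrahedron}, this region is exactly the ellipse $E$ satisfying $P\subseteq E\subseteq Q$, and $\pi(A_i)=v_i$.

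For the two translation statements, I would argue as follows. The rank-one elements of $\mathcal{S}^2_+$ are precisely its nonzero boundary points, and since $\pi$ restricts to a bijection $L\cap\mathcal{S}^2_+\to E$ sending boundary to boundary, $A_i$ has rank one iff $v_i\in\partial E$. For the edges, the linear functional $X\mapsto\langle X,B_j\rangle$ on $L$ cuts out the edge $e_j$ under $\pi$; when $B_j\succ 0$ this functional is strictly positive on $\mathcal{S}^2_+\setminus\{0\}$ and $e_j$ is disjoint from $E$, while when $B_j=bb^T$ has rank one the rank-one matrices in $\mathcal{S}^2_+$ annihilating $b$ form a single ray, whose unique intersection with $L$ maps under $\pi$ to the tangency point of $e_j$ and $E$. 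Hence $B_j$ has rank one iff $e_j$ is tangent to $E$.

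Substituting these translations into Theorem \ref{main_theorem}(a), which says that $M\in\partial\mathcal{M}_{3,2}$ iff either $M_{ij}=0$ for some $i,j$ or every ellipse nested between $P$ and $Q$ contains at least three vertices of $P$ and is tangent to at least three edges of $Q$, the second alternative becomes the desired rank-one count. The step I expect to be the principal obstacle is reconciling the zero-entry alternative of Theorem \ref{main_theorem}(a) with the uniform rank-one count demanded by the corollary: a single vanishing $M_{ij}$ only forces $A_i$ and $B_j$ themselves to have rank one, with a shared kernel direction corresponding to the incidence $v_i\in e_j$, and one has to leverage this coincidence -- for instance by a perturbation moving $M_{ij}$ to a nearby positive value while staying on the boundary, and then invoking the ellipse-condition half of the theorem -- to obtain a full count of three rank-one matrices on each side.
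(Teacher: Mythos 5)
Your dictionary between size-$2$ psd factorizations and ellipses nested between $P$ and $Q$ is correct and is essentially the one the paper uses; the paper sets it up conically, using the invertible map furnished by Proposition~4.4 of \cite{GRT}, while you slice by the affine plane $L=\{X:\langle X,C\rangle=1\}$ with $C=\sum_j B_j$, but the resulting translation ``$A_i$ has rank one $\Leftrightarrow v_i\in\partial E$'' and ``$B_j$ has rank one $\Leftrightarrow e_j$ tangent to $E$'' is the same. You have also correctly located the delicate step: the zero-entry disjunct in Theorem~\ref{main_theorem}(a) is not subsumed by the rank-one count. In fact the paper's own converse argument silently skips this point as well -- it produces a factorization with at most two rank-one $A_i$'s, builds the nested-cone configuration, and cites Theorem~\ref{main_theorem}(a) to conclude $M\notin\partial\mathcal M_{3,2}$, without verifying that $M$ is entrywise positive, which the ``only if'' direction of Theorem~\ref{main_theorem}(a) requires.

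Your proposed repair, however, does not go through: one cannot move $M_{ij}$ to a positive value while staying on the boundary. Take $P=\conv\{(0,1),(-\tfrac12,-\tfrac12),(\tfrac12,-\tfrac12)\}$, let $E$ be the unit disk, and let $Q$ be a large quadrilateral whose top edge $y=1$ is tangent to $E$ at $(0,1)$, with the other three edges far from $E$. Then $M=S_{P,Q}$ has $M_{11}=0$, so $M\in\partial\mathcal M_{3,2}$, yet the factorization associated to $E$ has only $A_1$ and $B_1$ of rank one. Any perturbation making $M_{11}>0$ separates $v_1$ from the edge $y=1$; one can then shrink $E$ slightly so that it avoids every vertex of $P$ and sits strictly inside $Q$, and Lemma~\ref{lem:interior} places the perturbed matrix in the interior of $\mathcal M_{3,2}$. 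So the perturbation leaves the boundary rather than staying on it. The zero-entry case therefore needs genuinely separate handling -- either the corollary should be restricted to strictly positive $M$, or its statement should carry the same ``$M_{ij}=0$ for some $i,j$, or \dots'' disjunct that appears in Theorem~\ref{main_theorem}(a).
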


\begin{proof}

Suppose that $M\not\in\partial \mathcal M_{3,2}$. Let $P = \cone\{a_1,\dots, a_p\}$ and $Q = \{x \in \R^{r-1}: \langle x,b_j \rangle \geq 0 \text{ for } j=1,\ldots,q\}$ such that $M=S_{P,Q}$. By~\cite[Proposition 4.4]{GRT} and Theorem~\ref{main_theorem}, there exists an invertible linear map $\pi$ such that $P \subseteq \pi(\mathcal S_+^2) \subseteq Q$ and the boundary of $\pi(\mathcal S_+^2)$ contains at most two rays of $P$ or is tangent to at most two facets of $Q$.

The invertibility of $\pi$ gives
$$\pi^{-1}(P) \subseteq  \mathcal S_+^2 \subseteq \pi^{-1}(Q),$$
where $\pi^{-1}(P) =  \cone\{\pi^{-1}(a_1),\dots, \pi^{-1}(a_p)\}$ and 
$$\pi^{-1}(Q) = \{x\in L\cap \mathcal S^2 : \langle \pi(x),b_j \rangle \geq 0\}= \{x\in L\cap \mathcal S^2 : \langle  x,\pi^T(b_j)\rangle \geq 0\}.$$
Thus $M = S_{\pi^{-1}(P), \pi^{-1}(Q)}$, since 
$$M_{ij} = \langle  a_i,b_j\rangle =  \langle  \pi(\pi^{-1}(a_i)),b_j\rangle =  \langle   \pi^{-1}(a_i),\pi^T(b_j)\rangle.$$ 

The inclusion $\pi^{-1}(P) \subseteq  \mathcal S_+^2$ implies that $\pi^{-1}(a_1),\ldots,\pi^{-1}(a_p)$ are psd. Taking dual of the inclusion $\mathcal S_+^2 \subseteq \pi^{-1}(Q)$ gives that $\pi^T(b_1),\ldots,\pi^T(b_q)$ are psd. Since $\pi$ is invertible, we know that either the boundary of $\mathcal S^2_+$ contains at most two rays of $\pi^{-1}(P)$ or is tangent to at most two facets of $\pi^{-1}(Q)$. Hence $\pi^{-1}(a_1),\ldots,\pi^{-1}(a_p),\pi^T(b_1),\ldots,\pi^T(b_q)$ gives a psd factorization of $M$ with at most two of $\pi^{-1}(a_1),\ldots,\pi^{-1}(a_p)$ having rank one or at most two of $\pi^T(b_1),\ldots,\pi^T(b_q)$ having rank one.

Suppose that there exists a psd factorization of $M$, given by matrices $A_1,\dots, A_p, B_1,\dots, B_q\in\mathcal S_+^2$, such that at most two of the $A_i$ have rank one. 
Consider $P = \cone\{A_1,\dots, A_p\}$ and $Q = \{x\in \mathcal S^2 : \langle x,B_j\rangle \geq 0, \forall j=1,\dots, q\}$. Then $P \subseteq \mathcal S_+^2 \subseteq Q$ and the boundary of $\mathcal S_+^2$ contains at most two rays of $P$. Using the inner product preserving bijection between $\mathcal S^2$ and $\R^3$, we can consider all objects in $\R^3$. In particular, the images of $A_1,\dots, A_p, B_1,\dots, B_q$ in $\R^3$ give a rank factorization of $M$.  By Theorem~\ref{main_theorem} (a), we have $M\not\in\partial \mathcal M_{3,2}$.
\end{proof}

We now investigate the topological boundary more thoroughly.

\begin{proposition}
Suppose $M \in \sM^{p\times q}_{3,2}$ satisfying $M \mathbf 1 = \mathbf 1$ is strictly positive.  Then $M$ lies on the topological boundary if and only if there exists a unique ellipse that fits between $P$ and $Q$.
\end{proposition}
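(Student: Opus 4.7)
The plan handles the two implications separately using Lemma~\ref{lem:interior} and Theorem~\ref{main_theorem}(a).

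For the direction ``unique ellipse implies $M$ on boundary,'' I argue the contrapositive. If $M$ lay in the interior of $\sM_{3,2}^{p\times q}$, Lemma~\ref{lem:interior} would produce an ellipse $E'$ with $P\subset E'\subset Q$ whose boundary contains no vertex of $P$, and inspecting that lemma's proof shows $E'$ can be shrunk further to also avoid tangency with every edge of $Q$. A whole open ball of perturbations of $E'$ in the five-dimensional space of ellipses would then still be sandwiched between $P$ and $Q$, producing infinitely many members of $\mathcal E := \{E : P\subseteq E\subseteq Q\}$ and contradicting uniqueness.

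For the reverse direction, assume $M\in\partial\sM_{3,2}^{p\times q}$. Strict positivity of $M$ rules out the $M_{ij}=0$ alternative of Theorem~\ref{main_theorem}(a), so every $E\in\mathcal E$ contains at least three vertices of $P$ on its boundary and is tangent to at least three edges of $Q$. Suppose toward a contradiction that $E_1\ne E_2$ lie in $\mathcal E$, represented by signature-$(2,1)$ matrices $X_1, X_2\in\sS^3$. I would interpolate via the primal pencil $X_\lambda := (1-\lambda)X_1 + \lambda X_2$, along which inner containment $P\subseteq E_\lambda$ is automatic because $a_i^T X_\lambda a_i$ is a convex combination of nonpositive quantities, and via the dual pencil $Y_\lambda := (1-\lambda)X_1^{-1}+\lambda X_2^{-1}$, along which outer containment is the analogous linear condition. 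On either pencil the complementary nonlinear containment reduces to the sign of a quadratic in the parameter---$\tilde\ell_j^T\mathrm{adj}(X_\lambda)\tilde\ell_j$ in the primal case, $a_i^T\mathrm{adj}(Y_\lambda)a_i$ in the dual case---which has the correct sign at both endpoints and thus throughout $[0,1]$ provided its leading coefficient has the correct sign.

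Assuming each pencil stays in $\mathcal E$, set $V_i := \partial E_i\cap\{\text{vertices of }P\}$ and $L_i := \{\text{edges of }Q\text{ tangent to }E_i\}$. Since a convex combination of nonpositive numbers vanishes only when both vanish, the primal pencil satisfies $\partial E_\lambda\cap\{\text{vertices of }P\} = V_1\cap V_2$ for every $\lambda\in(0,1)$, and dually the set of tangent edges along the dual pencil equals $L_1\cap L_2$ on $(0,1)$. If either $|V_1\cap V_2|<3$ or $|L_1\cap L_2|<3$, the corresponding interior pencil ellipse violates Theorem~\ref{main_theorem}(a), yielding a contradiction. Otherwise $E_1$ and $E_2$ both pass through the same three vertices of $P$ and are tangent to the same three edges of $Q$; the resultant construction in the proof of Theorem~\ref{main_theorem}(c), combined with the convex-nesting requirement $P\subseteq E\subseteq Q$ (which discards non-ellipse or incorrectly oriented solution conics), then forces $E_1=E_2$, completing the contradiction. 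The main technical obstacle is the sign control for the leading coefficient of the pencil quadratics above; if neither pencil alone suffices for all parameter values, the remedy is to concatenate short primal and dual segments, each trivializing a complementary side of the sandwiching condition, and to verify that such a hybrid path remains in $\mathcal E$ through careful endpoint matching.
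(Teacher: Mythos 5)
Your ``if'' direction (interior implies multiple ellipses) is essentially the paper's argument. The ``only if'' direction, however, has a genuine gap rooted in a misidentification of which conditions are linear along the primal pencil.

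You treat the outer containment $E_\lambda\subseteq Q$ along the primal pencil $q_\lambda=(1-\lambda)q_0+\lambda q_1$ as a nonlinear constraint governed by the sign of $\tilde\ell_j^T\mathrm{adj}(X_\lambda)\tilde\ell_j$, and name controlling the leading coefficient of this quadratic as the main obstacle. But that adjugate expression encodes tangency/intersection of the boundary conic with the line, not containment of the region in the half-plane, and containment is in fact automatic: if $x\notin Q$ then $x$ lies outside both $E_0$ and $E_1$ (so $q_0(x)<0$ and $q_1(x)<0$), hence $q_\lambda(x)<0$ for every $\lambda\in[0,1]$, i.e.\ $x\notin E_\lambda$. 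Thus the obstacle you flag does not exist, the dual pencil is unnecessary, and a single ellipse $E_{1/2}$ in $\mathcal E$ is produced with no extra hypothesis. (The quadratic part of $q_\lambda$ stays negative definite, so $E_\lambda$ remains an ellipse.)

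The second problem is the closing step. From your pencil analysis you obtain only that $E_1,E_2$ share the same three vertices of $P$ and the same three tangent \emph{edges} of $Q$, and you appeal to the resultant construction in Theorem~\ref{main_theorem}(c) plus the nesting constraint to force $E_1=E_2$. This is insufficient: three points and three tangent lines generically determine several conics, not one, and the nesting constraint does not obviously single one out. The paper's stronger observation is that $E_{1/2}$ being tangent to an edge $f$ forces $E_0$ and $E_1$ to be tangent to $f$ \emph{at the same point} (on $f$ both $q_0\le 0$ and $q_1\le 0$, so $q_0+q_1=0$ forces both to vanish). Combined with the three shared vertices of $P$, this gives six common points (distinct and no three collinear, using strict positivity of $M$), and five points in general position determine a unique conic. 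That is the missing ingredient your argument would need to close.

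Separately, you hedge on whether the dual pencil stays in $\mathcal E$ and speak of concatenating primal and dual segments; once the primal-pencil containment is recognized as automatic, this machinery can be discarded entirely.
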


\begin{proof}
Abusing the terminology, we will call the region bounded by an ellipse an ellipse in this proof. A matrix in the relative interior of $\sM_{3,2}$ will have multiple ellipses nested between $P$ and $Q$: By the only if direction of the proof of Theorem~\ref{main_theorem} part (a), there exists an ellipse that is contained in $Q$ and strictly contains $P$. We can just take slight scalings of this ellipse to get multiple ellipses.  This proves the ``if'' direction.

For the ``only if'' direction, suppose $M$ lies on the topological boundary and $E_0$ and $E_1$ are two ellipses nested between $P$ and $Q$.  Let $E_{1/2}$ be the ellipse determined by averaging the quadratics defining $E_0$ and $E_1$, i.e.
\[ E_{1/2} = \left\{ x : q_0(x) + q_1(x) \geq 0 \right\} \textup{ where } E_i = \left\{ x : q_i(x) \geq 0 \right\}. \]
It is straightforward to see that $E_{1/2}$ is nested between $P$ and $Q$.  Furthermore, if $v$ is a vertex of $P$, then $E_{1/2}$ passes through $v$ if and only if both $E_0$ and $E_1$ pass through $v$.  Similarly, if $f$ is a facet of $Q$, then $E_{1/2}$ is incident to $f$ if and only if $E_0$ and $E_1$ are tangent to $f$ at the same point.  By Theorem~\ref{main_theorem}, the ellipse $E_{1/2}$ must pass through three vertices of $P$ and three edges of $Q$.  Hence, there must exist six distinct points that both $E_0$ and $E_1$ pass through. No three of the six points are collinear, since ellipses $E_0$ and $E_1$ pass through them. Since five distinct points in general position determine a unique conic, we must have that $E_0 = E_1$.
\end{proof}

\begin{example} \rm
In the previous result, we examined the geometric configurations on the boundary of the 
semialgebraic set coming from strictly positive matrices. The 
simplest idea for such a matrix is to take two equilateral triangles 
and expand the inner one until we are on a boundary configuration as in Figure~\ref{fig:two_equilateral_on_the_boundary}.

\begin{figure}[H]
\centering
\begin{subfigure}[b]{0.45\textwidth}
\centering
\includegraphics[width=0.7\textwidth]{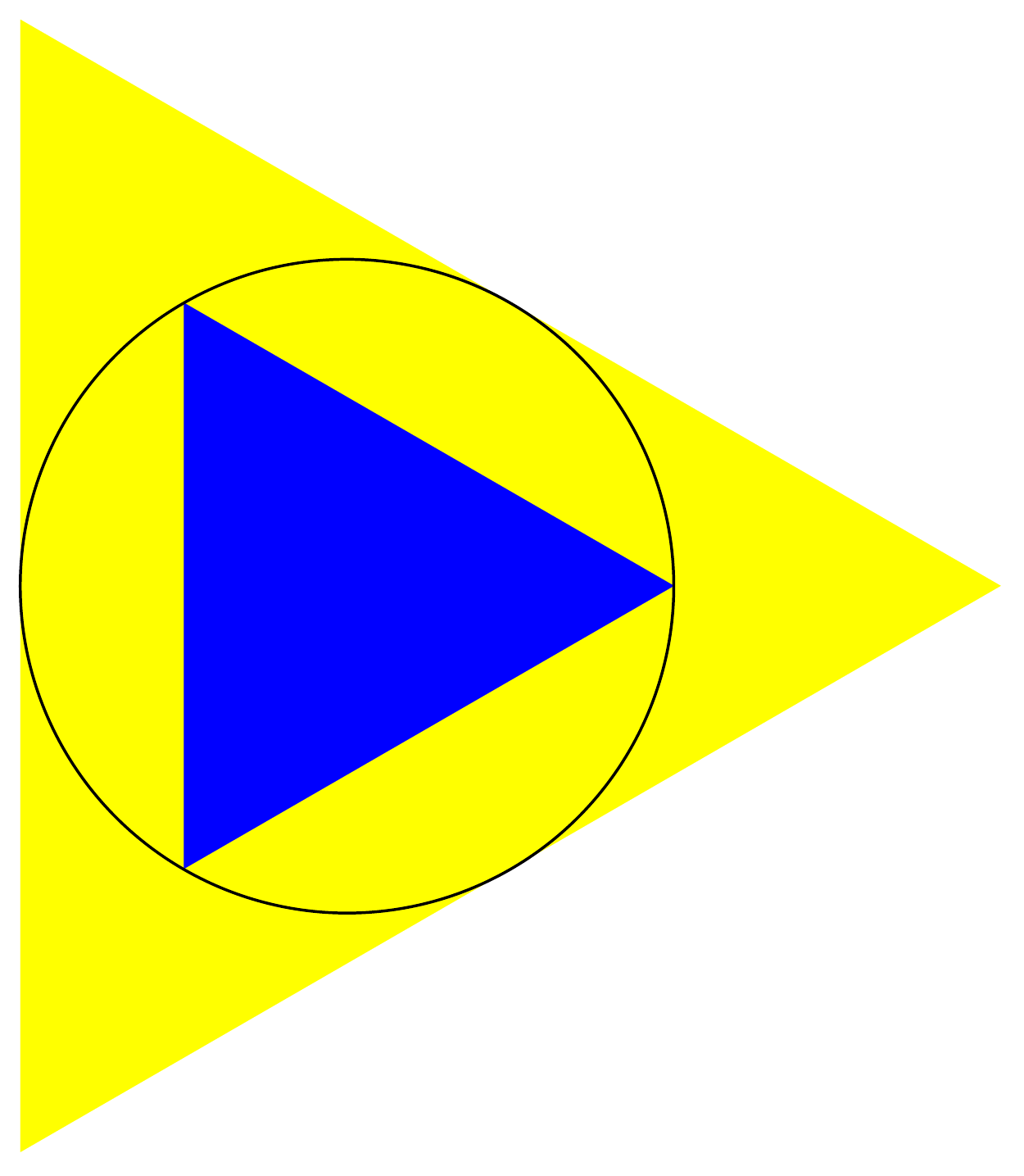}
\caption{Boundary configuration\\ \vspace{0.6cm} }
\label{fig:two_equilateral_on_the_boundary}
\end{subfigure}
\begin{subfigure}[b]{0.42\textwidth}
\centering
\includegraphics[width=0.7\textwidth]{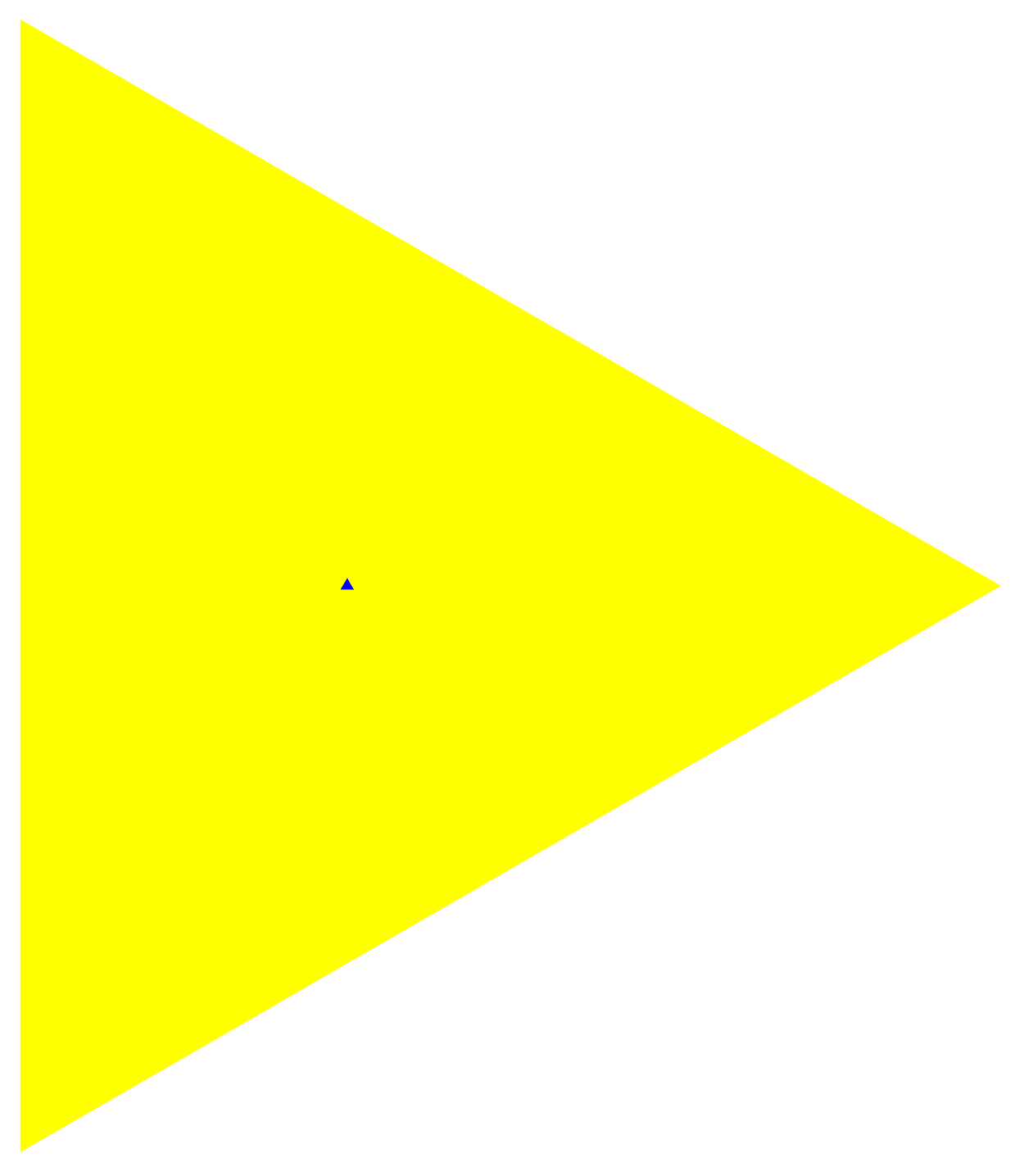}
\caption{Interior configuration which also lies on the algebraic boundary $\overline{\partial\mathcal M_{3, 2}}$}
\label{fig:not_on_the_boundary}
\end{subfigure}
\caption{Geometric configurations of matrices in $\mathcal M_{3,2}^{3\times 3}$}\label{fig:boundary_and_interior}
\end{figure}

This configuration has the slack matrix 
\begin{equation}\label{circulant_matrix_on_the_boundary}
\frac{1}{6}
\begin{bmatrix}
4 &1 & 1\\
1 & 4 &1\\
1 & 1 & 4
\end{bmatrix}.
\end{equation}
The $1035$ term boundary polynomial from Theorem~\ref{main_theorem} vanishes on this matrix, as we expect.

This matrix lies in the set of $3 \times 3$ circulant matrices which have the form 
$$
\begin{bmatrix}
a & b & c\\
c & a & b\\
b & c & a
\end{bmatrix}.
$$
It was shown in~\cite[Example 2.7]{FGPRT} that these matrices have psd rank at most two precisely when $a^2+b^2+c^2-2(ab+ac+bc) \leq 0$. As expected, whenever this polynomial vanishes, the $1035$ term boundary polynomial vanishes as well. The matrix~(\ref{circulant_matrix_on_the_boundary}) is a regular point of the hypersurface defined by the boundary polynomial.
Figure~\ref{fig:not_on_the_boundary} shows an instance of parameters $a,b,c$ such that the matrix is on the algebraic boundary but not on the topological boundary -- the polynomial vanishes, but the matrix lies in the interior of $\mathcal M_{3, 2}$.

We were interested in finding out if the $1035$ term boundary polynomial could be used in an inequality to classify circulant matrices of psd rank at most two. The family of circulant matrices which have $c=1$ and whose psd rank is at most two is depicted in Figure~\ref{fig:circulantBoundary2D}. The boundary polynomial, shown in Figure~\ref{fig:regionsBoundaryPolynomial2D}, takes both positive and negative values on the interior of the space. Figures~\ref{fig:circulantBoundary3D} and~\ref{fig:regionsBoundaryPolynomial3D} show the semialgebraic set and the boundary polynomial in the $3$-dimensional
space.

\begin{figure}[h!]
\centering
\begin{subfigure}[b]{0.45\textwidth}
\includegraphics[width=0.9\textwidth]{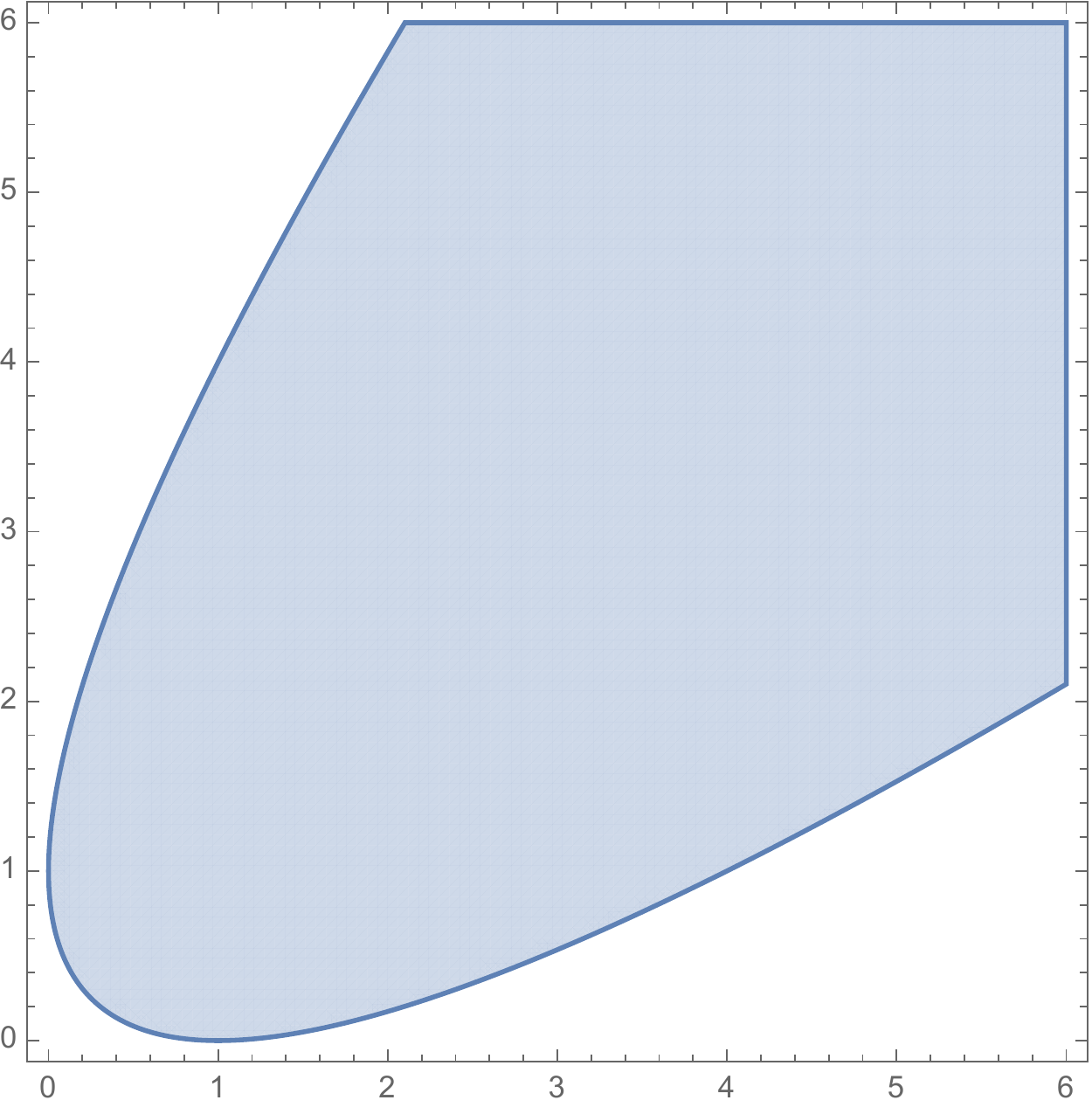}
\caption{Circulant matrices of psd rank at most 2}
\label{fig:circulantBoundary2D}
\end{subfigure}
\begin{subfigure}[b]{0.45\textwidth}
\centering
\includegraphics[width=0.9\textwidth]{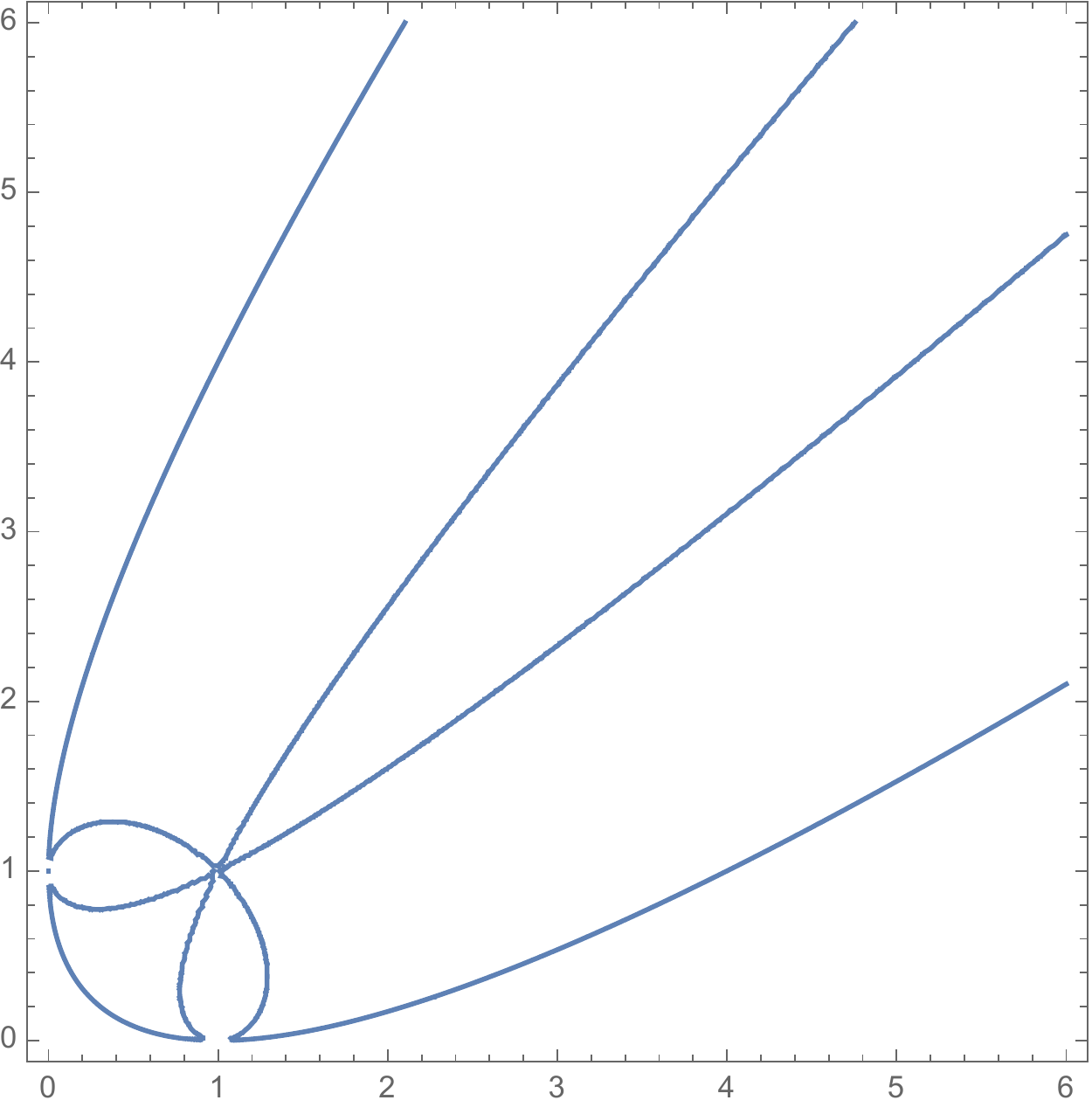}
\caption{The boundary polynomial}
\label{fig:regionsBoundaryPolynomial2D}
\end{subfigure}
\caption{$3 \times 3$ circulant matrices in $\mathbb{R}^2$}\label{fig:circulant_matrices_2D}
\end{figure}

\begin{figure}[h!]
\centering
\begin{subfigure}[b]{0.45\textwidth}
\centering
\includegraphics[width=0.9\textwidth]{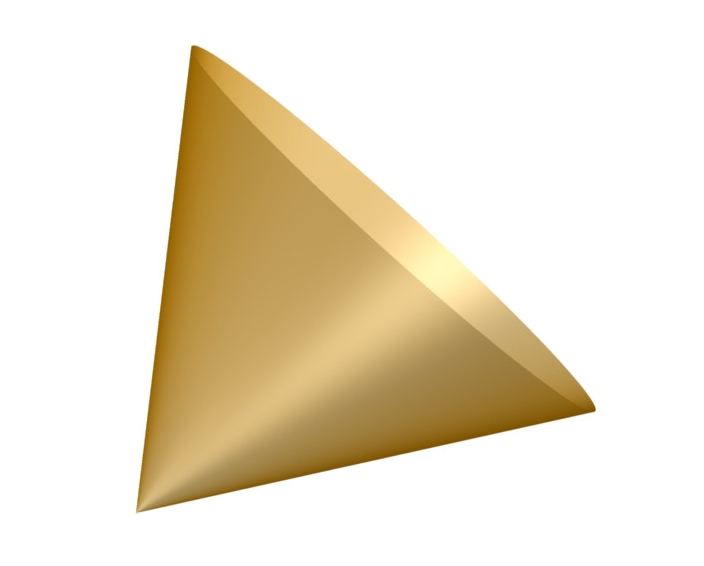}
\caption{Circulant matrices of psd rank at most 2}
\label{fig:circulantBoundary3D}
\end{subfigure}
\begin{subfigure}[b]{0.45\textwidth}
\centering
\includegraphics[width=0.9\textwidth]{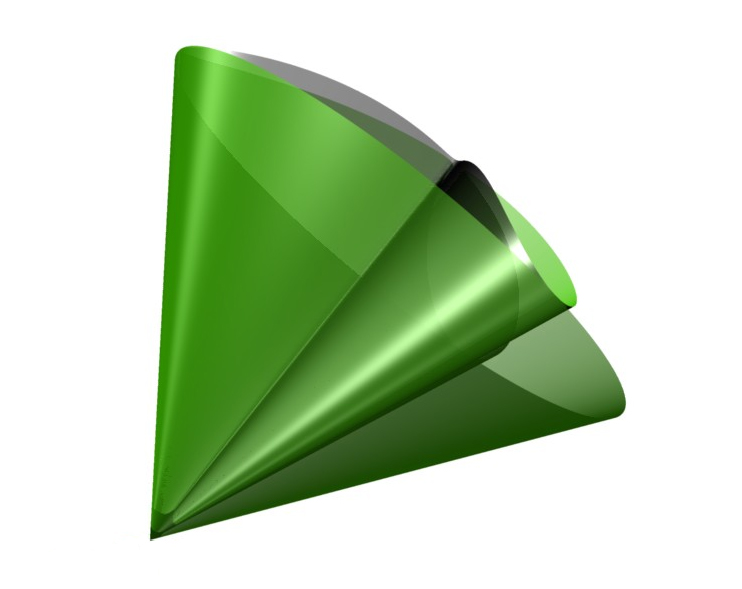}
\caption{The boundary polynomial}
\label{fig:regionsBoundaryPolynomial3D}
\end{subfigure}
\caption{$3 \times 3$ circulant matrices in $\mathbb{R}^3$}\label{fig:circulant_matrices_3D}
\end{figure}
\end{example}

\section{Matrices of higher psd rank}\label{sec:geometricInterpretation}

In Corollary~\ref{cor:algebraic}, we showed that a matrix lies on the boundary $\partial \mathcal M_{3,2}$ if and only if in every psd factorization $M_{ij} = \langle A_i, B_j\rangle$, at least three $A_i$'s and at least three $B_j$'s have rank one. In analogy with this result, we conjecture that a matrix lies on the boundary $\partial \mathcal M_{r,k}$ if and only if in every psd factorization $M_{ij} = \langle A_i, B_j\rangle$, at least $k+1$ matrices $A_i$ and at least $k+1$ matrices $B_j$ have rank one.

\begin{conjecture}\label{thm:k+1k}
A matrix $M\in\mathbb R^{p\times q}_{\geq 0}$ satisfying $M \mathbf 1=\mathbf1$ lies on the boundary $\partial \mathcal M_{r, k}$ if and only if for every size-$k$ psd factorization $M_{ij} = \langle A_i, B_j\rangle$, at least $k+1$ of the matrices $A_1,\dots, A_p\in\mathcal S^2_+$ have rank one and at least $k+1$ of the matrices $B_1,\dots, B_q\in\mathcal S^2_+$ have rank one.
\end{conjecture}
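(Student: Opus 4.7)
The plan is to reduce Conjecture \ref{thm:k+1k} to the geometric conjecture \ref{conjecture:geometric_description2} via the dictionary between size-$k$ psd factorizations and nested spectrahedral shadows, and then mimic the two-step argument from the proof of Corollary \ref{cor:algebraic}.

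\textbf{Step 1 (factorization $\leftrightarrow$ nested spectrahedral shadow).} Given any size-$k$ psd factorization $M_{ij}=\langle A_i,B_j\rangle$, I would form the cones
\[
P_{\mathrm{fact}} := \cone\{A_1,\dots,A_p\} \subseteq \mathcal S^k, \qquad Q_{\mathrm{fact}} := \{X \in \mathcal S^k : \langle X,B_j\rangle \geq 0,\ j=1,\dots,q\},
\]
so that $P_{\mathrm{fact}} \subseteq \mathcal S^k_+ \subseteq Q_{\mathrm{fact}}$ and $M$ is the generalized slack matrix of this nested triple. Since the extreme rays of $\mathcal S^k_+$ are exactly the rank-one psd matrices and $\mathcal S^k_+$ is self-dual under the trace inner product, $A_i$ has rank one iff the ray $\R_{\geq 0} A_i$ lies on $\partial\mathcal S^k_+$, and $B_j$ has rank one iff the hyperplane $\{\langle \cdot,B_j\rangle=0\}$ is a supporting hyperplane of $\mathcal S^k_+$. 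Exactly as in the proof of Corollary \ref{cor:algebraic}, an invertible linear transformation converts this configuration into one with the canonical polytopes $P,Q \subseteq \R^{r-1}$ associated to $M$ and a spectrahedral shadow $\sigma = \pi(L \cap \mathcal S^k_+)$ with $P \subseteq \sigma \subseteq Q$; under this translation, rank-one $A_i$'s correspond to vertices of $P$ lying on $\partial\sigma$, and rank-one $B_j$'s correspond to facets of $Q$ tangent to $\sigma$.

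\textbf{Step 2 (applying the geometric conjecture).} With this dictionary in place, the statement ``every size-$k$ psd factorization has at least $k+1$ rank-one $A_i$'s and at least $k+1$ rank-one $B_j$'s'' is equivalent to ``every nested size-$k$ spectrahedral shadow $\sigma$ between $P$ and $Q$ has boundary passing through at least $k+1$ vertices of $P$ and tangent to at least $k+1$ facets of $Q$''. The latter is precisely what Conjecture \ref{conjecture:geometric_description2} asserts to characterize $M \in \partial\mathcal M_{r,k}$, so both directions of Conjecture \ref{thm:k+1k} fall out. Concretely: a factorization with at most $k$ rank-one $A_i$'s produces a shadow with too few vertex incidences, which by \ref{conjecture:geometric_description2} witnesses $M \notin \partial\mathcal M_{r,k}$; conversely, if $M \notin \partial\mathcal M_{r,k}$, the geometric conjecture hands us a shadow with few incidences, from which one reconstructs a psd factorization with few rank-one matrices (this reverse reconstruction is the same Schur-complement-style move from \cite{GRT} used in the proof of Corollary \ref{cor:algebraic}).

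The main obstacle is therefore the geometric conjecture \ref{conjecture:geometric_description2} itself. For $k=2$, the hard direction of Theorem \ref{main_theorem}(a) was handled by putting the configuration in projective normal form and writing down an explicit one-parameter family of enlarging conics through the tangent lines. Carrying out the analogous perturbation for spectrahedral shadows of size $k$ is what I expect to be the real difficulty: one must show that a nested shadow touching $P$ at fewer than $k+1$ vertices (or $Q$ along fewer than $k+1$ facets) can be deformed within the $\binom{k+1}{2}$-parameter family of size-$k$ spectrahedra into a nearby shadow that strictly separates from $P$ (or lies strictly inside $Q$). A naive dimension count is encouraging---each rank-one incidence imposes roughly a codimension-$(k-1)$ condition on the choice of slice $L$ and projection $\pi$---but turning this into a rigorous perturbation is complicated by the nontrivial rank stratification of $\partial \mathcal S^k_+$ and by the potential distinction between spectrahedra and their shadows (Conjecture \ref{conjecture:spectrahedra_are_enough}). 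I would expect the $p=q=r=k+1$ case treated in Section \ref{sec:5.1} to serve as a template, with the general conjecture requiring a careful local analysis of how spectrahedral shadows degenerate as further incidences with $P$ and $Q$ are forced.
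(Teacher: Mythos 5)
This statement is labelled a \emph{conjecture} in the paper; the authors do not prove it, and so there is no proof to compare against. Your proposal is honest about this---you do not claim to close the argument, and you correctly identify the geometric Conjecture~\ref{conjecture:geometric_description2} as the essential open step. That much agrees with what the paper itself does: after stating Conjecture~\ref{conjecture:geometric_description2}, the authors offer the equivalence with Conjecture~\ref{thm:k+1k}, the lemmas of Section~\ref{sec:5.1} (which settle pieces of the ``if'' direction in the case $p=q=r=k+1$), and the computational evidence of Section~\ref{section:higher_psd_rank}; they do not claim more.

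There is, however, a gap in your Step~1 that is worth naming. You assert that ``exactly as in the proof of Corollary~\ref{cor:algebraic}, an invertible linear transformation converts this configuration'' into the canonical nested pair $P\subseteq Q\subseteq\R^{r-1}$ together with a spectrahedral shadow $\sigma=\pi(L\cap\mathcal S^k_+)$, under which rank-one $A_i$'s become vertex incidences of $P$ and rank-one $B_j$'s become facet tangencies of $Q$. In the proof of Corollary~\ref{cor:algebraic} this works because $r=3=\binom{2+1}{2}$, so $\mathcal S^2\cong\R^3$ and the map $\pi$ from Theorem~\ref{thm:spectrahedron} (more precisely, the one supplied by \cite[Proposition 4.4]{GRT}) can be taken to be a linear \emph{isomorphism}; one can then pull the whole configuration back into $\mathcal S^2$ and read off the ranks directly. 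For general $r$ and $k$ with $r<\binom{k+1}{2}$ the map $\pi:\mathcal S^k\to\R^{r-1}$ has a nontrivial kernel, and the translation between ``$A_i$ has rank one'' and ``the corresponding vertex of $P$ lies in the rank-one locus of $\sigma$'' is no longer a tautology; in particular a vertex of $P$ lying on $\partial\sigma$ is strictly weaker than lying in the rank-one locus, and your dictionary conflates the two. The paper flags precisely this: ``For $r=\binom{k+1}{2}$, one can show similarly to the proof of Corollary~\ref{cor:algebraic} that Conjectures~\ref{thm:k+1k} and~\ref{conjecture:geometric_description2} are equivalent. This case differs from other cases, by linear map $\pi$ being invertible.'' So even granting Conjecture~\ref{conjecture:geometric_description2}, your Step~1 would need an additional argument to handle the non-invertible case, and that equivalence is itself part of what is open. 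Your assessment of the real difficulty---controlling how size-$k$ spectrahedral shadows degenerate as incidences with $P$ and $Q$ accumulate, and the spectrahedron-versus-shadow issue in Conjecture~\ref{conjecture:spectrahedra_are_enough}---is otherwise consistent with the evidence the paper assembles (Lemmas~\ref{lemma:enough_to_consider_spectrahedra} and~\ref{lemma:two_spectrahedra}).
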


Let $M\in\mathbb R^{p \times q}_{\geq 0}$ be a full rank matrix, and let $P\subseteq Q\subseteq \mathbb R^{r-1}$ be nested polytopes such that $M = S_{P, Q}$. By Theorem~\ref{thm:spectrahedron}, the matrix $M$ has psd rank at most $k$ if and only if we can nest a spectrahedral shadow $C$ of size $k$ between $P$ and $Q$.
By definition, the spectrahedral shadow $C$ is a linear projection of a spectrahedron $\tilde C = L\cap\mathcal S^k_+$  of size $k$. 

\begin{definition}We say that a vector $v\in C$ lies in the {\em rank $s$ locus} of $C$ if there exists a $k\times k$ psd matrix in $\tilde C$ of rank $s$ that projects onto $v$.
\end{definition}

The geometric version of the Conjecture~\ref{thm:k+1k} is:

\begin{conjecture}\label{conjecture:geometric_description2}
A matrix $M$ is on the boundary $\partial \mathcal M_{r,k}$ if and only if all spectrahedral shadows $C$ of size $k$ such that $P \subseteq C \subseteq Q$ contain $k+1$  vertices of $P$ at rank one loci and touch $k+1$ facets of $Q$ at rank $k-1$ loci. 
\end{conjecture}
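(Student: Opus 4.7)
The plan is to mimic the proof of Theorem~\ref{main_theorem}(a), whose central tool is the interior criterion of Lemma~\ref{lem:interior}. The key preliminary step is to generalize that lemma to arbitrary $(r,k)$: a matrix $M$ with $M\mathbf 1 = \mathbf 1$ lies in the interior of $\mathcal M_{r,k}$ if and only if there exists a size-$k$ spectrahedral shadow $C$ with $P \subseteq C \subseteq Q$ whose boundary contains no vertex of $P$. The argument should go through essentially verbatim: if such a $C$ exists, then Lemma~\ref{continuity_of_factorizations} shows that small perturbations of $M$ yield small perturbations of the polytopes $P$ and $Q$ between which $C$ continues to fit; conversely, if $M$ is interior, a small homothetic dilation of $P$ still lies inside $Q$, and any shadow nested between the dilated $P$ and $Q$ strictly contains the original $P$.

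Granted this criterion, the ``if'' direction of the conjecture is immediate: if every nested shadow touches at least $k+1$ vertices of $P$ at rank-one loci, then no shadow strictly separates $P$ from its boundary, so $M$ lies on $\partial \mathcal M_{r,k}$. The ``only if'' direction reduces to a perturbation problem. Suppose $C$ is a size-$k$ spectrahedral shadow with $P \subseteq C \subseteq Q$ that touches at most $k$ vertices of $P$ at rank-one loci. Lift $C$ to a spectrahedron $\tilde C = L \cap \mathcal S^k_+$ projecting to $C$; at each touched vertex $v_i$ the fiber contains a rank-one psd matrix $A_i \in \tilde C$. I would seek a continuous deformation $L \mapsto L_t$ of the affine slice such that the projected shadow $C_t = \pi(L_t \cap \mathcal S^k_+)$ strictly contains $v_1, \dots, v_j$ for $t > 0$ and still fits inside $Q$; together with the generalized interior criterion this would contradict $M \in \partial \mathcal M_{r,k}$. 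The analogous statement about facet tangencies at rank-$(k-1)$ loci should then follow by polar duality, since the dual of a size-$k$ spectrahedral shadow is again a size-$k$ spectrahedral shadow, with rank-one boundary behavior exchanging with rank-$(k-1)$ behavior.

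The main obstacle is making this perturbation rigorous. A heuristic dimension count supports the $k+1$ threshold: the space of affine slices of $\mathcal S^k$ has ambient dimension quadratic in $k$, while the local cone of rank-one directions at $A_i$ has dimension $k-1$, so $j \leq k$ rank-one incidences should leave positive-dimensional freedom to push $\pi(\tilde C)$ past each $v_i$ without crossing any facet of $Q$. In the $k=2$ case this count becomes explicit through the classical ``five points determine a conic'' fact exploited in the proof of Theorem~\ref{main_theorem}. For $k \geq 3$ two subtleties make the analogue much harder: rank strata on the boundary of $\pi(L \cap \mathcal S^k_+)$ can migrate discontinuously under deformations of $L$, and the boundary of a spectrahedral shadow is in general not cut out by a single polynomial. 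A full proof seems to require either Schubert-calculus-style parameter counts on the Grassmannian of affine slices, or a local semialgebraic description of $\partial \mathcal M_{r,k}$ that exhibits the $(k+1) + (k+1)$ incidences as its natural defining conditions; the case $p = q = r = k+1$ treated in Section~\ref{sec:5.1} is the natural place to begin.
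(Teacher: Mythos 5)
This statement is a \emph{conjecture} in the paper, not a proved theorem: the authors explicitly present only partial evidence for it, namely Lemma~\ref{lemma:enough_to_consider_spectrahedra} (spectrahedra suffice when $p=q=r=k+1$), Lemma~\ref{lemma:two_spectrahedra} (with $P$ the standard simplex, $k$ rank-one incidences allow one to shrink the spectrahedron so that all $k+1$ vertices sit at rank-one loci), and the computational rank-of-Jacobian checks in Section~\ref{section:higher_psd_rank}. So there is no ``paper's own proof'' to compare against, and your write-up should be judged as a research plan rather than a verification of an existing argument.

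Within that framing, there is one gap that you do not flag and that I think is the central obstruction, over and above the perturbation-theoretic difficulties you do acknowledge. You propose to generalize Lemma~\ref{lem:interior} to the criterion ``$M$ is interior iff there is a size-$k$ shadow $C$ with $P\subseteq C\subseteq Q$ whose boundary contains no vertex of $P$,'' and then treat the conjecture's incidence count as equivalent to that criterion. But the conjecture is not phrased in terms of boundary contact; it is phrased in terms of \emph{rank loci}. By Definition~\ref{conjecture:geometric_description2}'s preceding definition, a point $v\in C$ is in the rank-one locus if some rank-one matrix in $\tilde C=L\cap\mathcal S^k_+$ projects to $v$. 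For $k=2$ (Theorem~\ref{main_theorem}) the rank-one locus of an ellipse is exactly its boundary, so the two notions coincide and your plan matches the paper's argument. For $k\geq 3$ they do not coincide: a rank-one fiber point can project into the relative interior of $C$, and a vertex of $P$ can lie on $\partial C$ without being in the rank-one locus (e.g.\ at a rank-$2$ boundary point of a size-$3$ shadow). Consequently your ``if'' direction is not immediate --- ``every nested shadow has $k+1$ vertices at rank-one loci'' does not directly rule out a nested shadow with no vertex of $P$ on its topological boundary --- and the ``only if'' direction would need to push $C$ off rank-one loci, not merely off $\partial C$. Any correct generalization of Lemma~\ref{lem:interior} has to be stated and proved in terms of rank strata, and that is precisely where the discontinuous migration of rank strata you mention becomes fatal rather than merely inconvenient. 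Lemma~\ref{lemma:two_spectrahedra} is the paper's partial progress on exactly this point (in the simplex case, and replacing shadows by spectrahedra via Lemma~\ref{lemma:enough_to_consider_spectrahedra}); starting from there, rather than from a boundary-contact reformulation, is the more promising route.

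Two smaller cautions. The dimension heuristic ``rank-one directions at $A_i$ have dimension $k-1$'' undercounts: the relevant object is the codimension, inside the Grassmannian of affine slices $L$, of the locus of slices whose shadow contains a rank-one matrix over a prescribed point, and this is not linear in $k$ in any obvious way; the ``five points determine a conic'' count is special to $k=2$. And the duality step needs care: while $\mathcal S^k_+$ is self-dual and the configuration $P\subseteq C\subseteq Q$ dualizes cleanly, you should check that the dual of the specific shadow you construct is again a size-$k$ shadow with the rank-one/rank-$(k-1)$ exchange you want, rather than invoking closure of spectrahedral shadows under duality in the abstract, where size control is not automatic.
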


For $r=\binom{k+1}{2}$, one can show similarly to the proof of Corollary~\ref{cor:algebraic} that Conjectures~\ref{thm:k+1k} and~\ref{conjecture:geometric_description2} are equivalent. This case differs from other cases, by linear map $\pi$ being invertible.

The psd rank three and rank four setting corresponds to the geometric configuration where a $3$-dimensional spectrahedral shadow of size three is nested between $3$-dimensional polytopes. A detailed study of generic spectrahedral shadows can be found in~\cite{SS14}. 

\begin{example} \rm
We now give an example of a geometric configuration as in Conjecture~\ref{conjecture:geometric_description2}. We stipulate that the vertices of the interior polytope coincide with the nodes of the spectrahedron in Figure~\ref{figure:shadow1} and the facets of the outer polytope touch the boundary of this spectrahedron at rank two loci. In the dual picture, the vertices of the inner polytope lie on the rank one locus depicted in Figure~\ref{figure:shadow3} and the facets of the outer polytope contain the rank two locus of this spectrahedral shadow.

\begin{figure}[h]
\begin{subfigure}[b]{0.45\textwidth}
\centering
\includegraphics[width=0.85\textwidth]{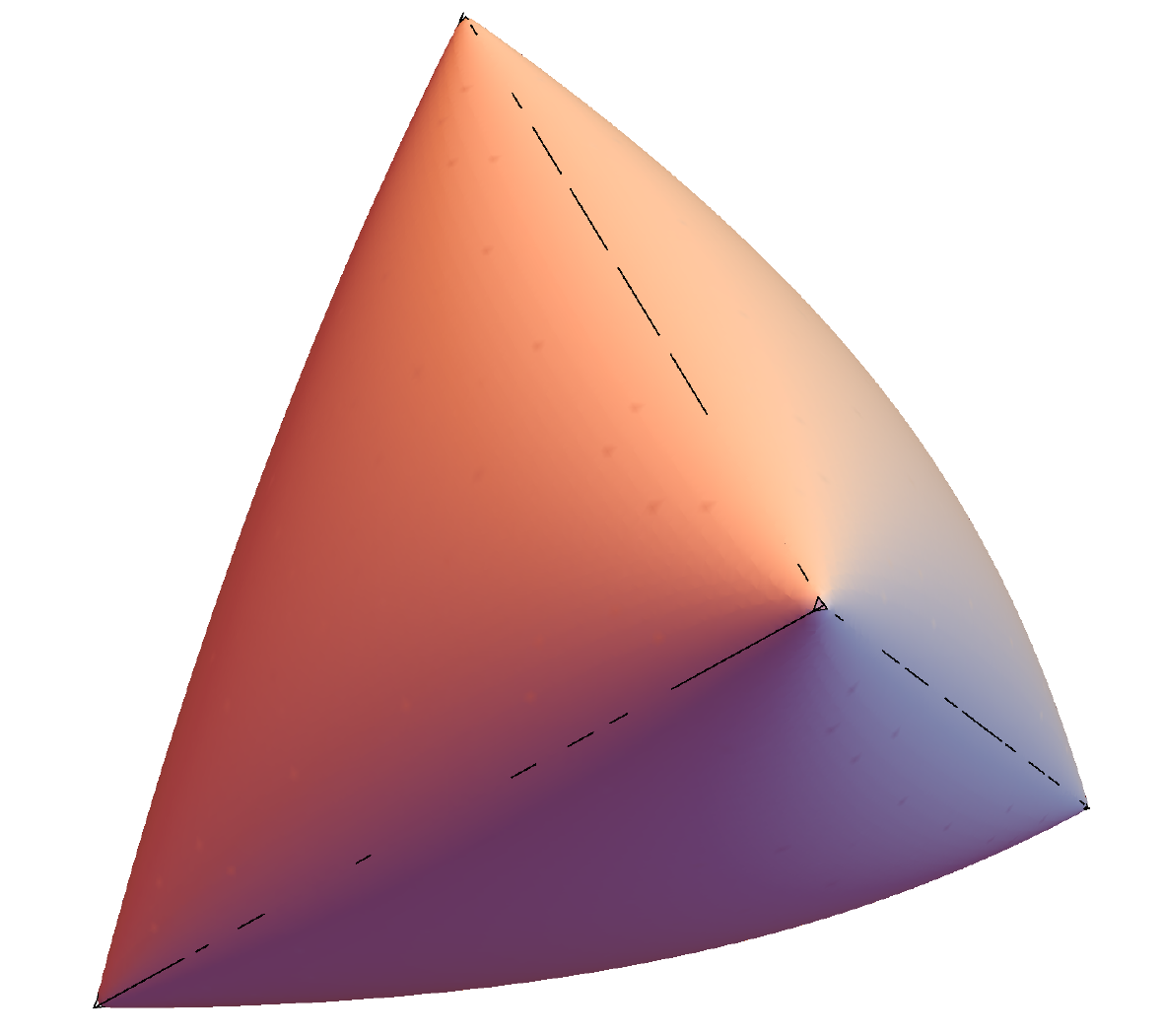}
\caption{Spectrahedron}
\label{figure:shadow1}
\end{subfigure}
\begin{subfigure}[b]{0.45\textwidth}
\centering
\includegraphics[width=0.9\textwidth]{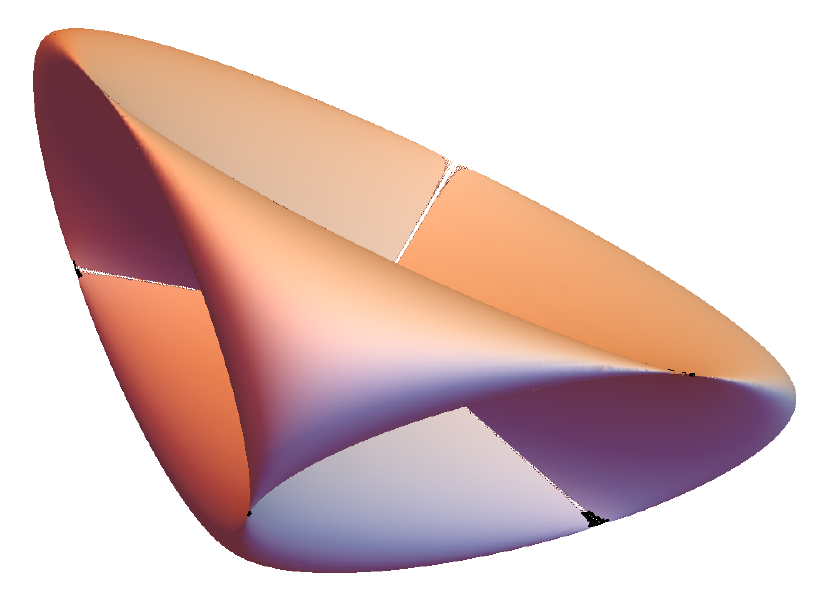}
\caption{Rank-one locus of the dual shadow}
\label{figure:shadow3}
\end{subfigure}
\caption{3-dimensional spectrahedral shadows}
\end{figure}
\end{example}

We end this section with a restatement of Conjecture~\ref{thm:k+1k} in a special case  using Hadamard square roots.

\begin{definition}
Given a nonnegative matrix $M$, let
$\sqrt{M}$ denote a Hadamard square root of $M$ obtained by replacing each entry in $M$ by one of its two possible square roots. The square root rank of a nonnegative matrix M, denoted as $\text{rank}_{\sqrt{}}(M)$, is the minimum rank of a Hadamard square root of $M$.
\end{definition}

\begin{lemma}[Lemma~2.4 in~\cite{GRT13}]
The smallest $k$ for which  a nonnegative real matrix  $M$ admits a $\mathcal{S}^k_+$-factorization in which all factors are matrices of rank one is $k=\text{rank}_{\sqrt{}}(M)$.
\end{lemma}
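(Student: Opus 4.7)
The plan is to establish two inequalities, one in each direction, between the minimum size of a rank-one $\mathcal{S}^k_+$-factorization and $\text{rank}_{\sqrt{}}(M)$. The whole argument rides on the elementary identity
\begin{equation*}
\langle aa^T,\, bb^T\rangle = \text{tr}(aa^T bb^T) = (a^Tb)^2,
\end{equation*}
which translates rank-one psd factorizations into ordinary (signed) factorizations whose entrywise square recovers $M$.

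First I would handle the direction "rank-one psd factorization $\Rightarrow$ square root rank". Assume $M$ has a size-$k$ psd factorization $M_{ij}=\langle A_i,B_j\rangle$ with each $A_i,B_j\in\mathcal{S}^k_+$ of rank one. Write $A_i=a_ia_i^T$ and $B_j=b_jb_j^T$ for some vectors $a_i,b_j\in\mathbb{R}^k$. The displayed identity gives $M_{ij}=(a_i^Tb_j)^2$. Form the matrix $N\in\mathbb{R}^{p\times q}$ with entries $N_{ij}:=a_i^Tb_j$; then $N_{ij}^2=M_{ij}$, so $N$ is a Hadamard square root of $M$. Moreover $N=UV^T$ where $U$ has rows $a_i^T$ and $V$ has rows $b_j^T$, whence $\text{rank}(N)\le k$. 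Thus $\text{rank}_{\sqrt{}}(M)\le k$.

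For the converse, suppose $N$ is a Hadamard square root of $M$ with $\text{rank}(N)=k$. Choose a factorization $N=UV^T$ with $U\in\mathbb{R}^{p\times k}$ and $V\in\mathbb{R}^{q\times k}$ (padding with zero columns if the rank is strictly smaller than $k$), and let $u_i^T$, $v_j^T$ be their rows. Then $N_{ij}=u_i^Tv_j$, so $M_{ij}=N_{ij}^2=(u_i^Tv_j)^2=\langle u_iu_i^T, v_jv_j^T\rangle$. Setting $A_i:=u_iu_i^T\in\mathcal{S}^k_+$ and $B_j:=v_jv_j^T\in\mathcal{S}^k_+$ yields the desired rank-one $\mathcal{S}^k_+$-factorization of size $k$.

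I do not expect a real obstacle here: the content of the lemma is essentially a bookkeeping identity, together with the observation that $k$ plays the same role on both sides (it bounds simultaneously the ambient dimension of the vectors $a_i,b_j$ and the rank of the corresponding matrix $N=UV^T$). The only point deserving minor care is that the vectors $a_i,b_j$ produced by rank-one psd decompositions are real (so the signs in the resulting square root $N$ are unrestricted real signs), and conversely that any Hadamard square root, including one whose entries are not all of the same sign, is admissible because the identity $(u_i^Tv_j)^2=M_{ij}$ does not depend on the signs of the inner products.
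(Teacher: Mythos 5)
This lemma is cited from~\cite{GRT13} rather than proved in the paper, but your argument is exactly the standard proof of that result, built on the identity $\langle aa^T,bb^T\rangle=(a^Tb)^2$ in both directions. It is correct; the only cosmetic caveat is that a zero row of $M$ would force a rank-zero factor, so the rank-one condition is usually read as ``rank at most one'' (or one assumes $M$ has no zero rows).
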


Hence Conjecture~\ref{thm:k+1k} is equivalent to the statement that a matrix $M\in\mathcal M_{k+1,k}^{(k+1) \times (k+1)}$ lies on the boundary $\partial \mathcal M_{k+1,k}^{(k+1) \times (k+1)}$ if and only if its square root rank is at most $k$.
We conclude this section with a conjecture which would lead to a semialgebraic description of $\mathcal M_{r, k}^{p \times q}$.

\begin{conjecture}\label{semialgebraic_description} 
Every matrix $M \in \mathcal{M}^{p \times q}_{r,k}$ has a psd factorization $M_{ij}=\langle A_i,B_j \rangle$, with at least $k$ matrices $A_i$ and $k-1$ matrices $B_j$, or at least $k-1$ matrices $A_i$ and $k$ matrices $B_j$ being rank one.
\end{conjecture}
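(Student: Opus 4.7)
The plan is to set up the problem geometrically via nested polytopes and spectrahedral shadows, then produce the required factorization by extremizing over the feasible set of shadows and reading off the incidences as rank-one factors. By Lemma~\ref{lem:rescale} we may assume $M\mathbf{1} = \mathbf{1}$, and by Lemma~\ref{lem:nest} we obtain nested polytopes $P \subseteq Q \subseteq \mathbb{R}^{r-1}$ with $M = S_{P,Q}$. By Theorem~\ref{thm:spectrahedron}, there is a size-$k$ spectrahedral shadow $C = \pi(L \cap \mathcal{S}^k_+)$ with $P \subseteq C \subseteq Q$. Exactly as in the proof of Corollary~\ref{cor:algebraic}, each vertex of $P$ that lies on the rank-one locus of $C$ corresponds to a rank-one $A_i$ in the psd factorization $M_{ij} = \langle A_i, B_j\rangle$, and dually each facet of $Q$ that is tangent to $C$ at a rank-$(k-1)$ point corresponds to a rank-one $B_j$ (via the dual spectrahedral shadow). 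It therefore suffices to produce a shadow $C$ with $k$ vertex incidences on one side and $k-1$ facet tangencies on the other.

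Among all spectrahedral shadows $C$ with $P \subseteq C \subseteq Q$, I would extremize a strictly convex functional such as volume, obtaining a shadow $C^*$. At $C^*$, every feasible infinitesimal deformation $\delta C$ must either violate $P \subseteq C$ (that is, $\delta C$ pulls the boundary off some vertex of $P$ currently on the rank-one locus) or violate $C \subseteq Q$ (that is, $\delta C$ pushes the boundary past some facet currently tangent at a rank-$(k-1)$ point). The required number of active constraints is governed by the effective dimension of the space of shadows and the dimensions of the rank-one and rank-$(k-1)$ loci. For $k=2$ the classical Löwner--John argument for the smallest enclosing ellipse yields at least three contacts and can be interpolated in a one-parameter family to distribute them as $2$ contacts with $P$ and $1$ tangency with $Q$ (or vice versa); for general $k$ one would carry out an analogous two-step construction, first shrinking toward $P$ to pick up $k$ vertex contacts and then using the remaining continuous degrees of freedom to achieve $k-1$ facet tangencies with $Q$.

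The main obstacle is making this dimension and transversality count rigorous in general. The space of size-$k$ spectrahedral shadows in $\mathbb{R}^{r-1}$ is a singular semialgebraic quotient of $\mathrm{Hom}(\mathcal{S}^k,\mathbb{R}^{r-1}) \times \mathbb{R}^{r-1}$ by the $\mathrm{GL}_k$-congruence action, and its effective dimension together with the dimensions of the rank-one and rank-$(k-1)$ strata on the boundary of $C$ depends on the stratum in a nontrivial way; unlike the ellipse case where conics form a well-behaved $\mathbb{P}^5$, the ``one contact = codimension one'' heuristic can fail on lower strata. A further subtlety is that the minimum of the chosen functional may be attained by a degenerate shadow (one of lower effective rank or dimension), and one must verify that such a limit still yields a valid size-$k$ factorization with the prescribed split $k$ and $k-1$, rather than $2k-1$ entirely on one side. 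A promising first step is to establish the conjecture in the square case $p = q = r = k+1$, where the boundary analysis from Conjecture~\ref{thm:k+1k} and the $r = \binom{k+1}{2}$ remark following Conjecture~\ref{conjecture:geometric_description2} already constrain the geometry heavily; the general case could then be reduced by a row-and-column deletion argument combined with a perturbation preserving psd rank at most $k$.
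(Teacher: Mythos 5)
This statement is labeled as a \emph{conjecture} in the paper, and the authors do not supply a proof of it; so there is no ``paper's own proof'' for your proposal to be compared against. What can be assessed is whether your attack plan is sound and whether it closes the question. It does not, and you are honest about this yourself; let me sharpen where the gaps are.

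Your step that translates geometric incidences into rank-one factors leans on the phrase ``exactly as in the proof of Corollary~\ref{cor:algebraic},'' but that proof works only because the linear map $\pi$ realizing the spectrahedral shadow is invertible. The paper flags precisely this point in the remark following Conjecture~\ref{conjecture:geometric_description2}: the equivalence between the algebraic statement (rank-one factors) and the geometric statement (incidences with rank loci of the shadow) is only established for $r=\binom{k+1}{2}$, because there $\pi$ can be taken invertible. For an honest shadow with nontrivial kernel, a vertex of $P$ lying on the rank-one locus does still yield a rank-one $A_i$ (one just lifts the vertex to a rank-one preimage), but the dual direction---extracting rank-one $B_j$'s from facets of $Q$ tangent at rank-$(k-1)$ loci---does not come for free: the dual of a spectrahedral shadow is not simply the shadow of the dual spectrahedron, and the $B_j$ are obtained via a nonnegative-functional argument rather than a pointwise preimage. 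This needs its own lemma; it is not inherited from the $\mathcal{M}_{3,2}$ case. (Lemma~\ref{lemma:enough_to_consider_spectrahedra} would let you replace the shadow by an honest spectrahedron, but only in the square case $p=q=r=k+1$.)

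The second gap is the one you name yourself: the dimension/transversality count that is supposed to force $k$ contacts with $P$ and $k-1$ tangencies with $Q$ (or vice versa) at the extremizer $C^*$. In the ellipse case the conic space is a smooth $\mathbb{P}^5$ and each generic incidence is codimension one, so the Löwner--John heuristic has teeth; for general $k$ the parameter space of shadows is a singular $\mathrm{GL}_k$-quotient, the rank strata on $\partial C$ can be badly behaved, and a single ``contact'' need not cut down one dimension. You also correctly note that the extremizer may degenerate (collapse to a lower-dimensional or lower-rank shadow), and that the contacts might all land on the $P$ side giving $2k-1$ rank-one $A_i$ and no rank-one $B_j$, which does not directly imply the asymmetric $k$/$k-1$ split claimed in Conjecture~\ref{semialgebraic_description}. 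Until both of those issues are controlled, the extremization gives intuition but not a proof. Your suggested first step---settle the $p=q=r=k+1$ case using Lemma~\ref{lemma:enough_to_consider_spectrahedra} and Lemma~\ref{lemma:two_spectrahedra}, then reduce the general case---is consistent with the partial evidence the authors themselves present in Section~\ref{sec:higherPsdRank}, and seems like the right place to start.
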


If this conjecture were true, there would be $\binom{p}{k}\binom{q}{k-1}+\binom{p}{k-1}\binom{q}{k}$ options for selecting the $2k-1$ rank-one matrices. For each such option we would be able to describe the semialgebraic set of all such matrices that have psd rank $k$.

\section{Evidence towards Conjecture \ref{thm:k+1k}}\label{sec:higherPsdRank}

In this section, we present partial evidence towards proving Conjecture~\ref{thm:k+1k} if $p=q=r=k+1$. Section \ref{sec:5.1} is theoretical in nature, while Section \ref{section:higher_psd_rank} exhibits computational results.

\subsection{Nested spectrahedra} \label{sec:5.1}
By Theorem \ref{thm:spectrahedron} a matrix $M$ for which $M\mathbf1 = \mathbf1$ has psd rank $k$ if and only if we can nest a spectrahedral shadow of size $k$ between the polytopes $P$ and $Q$ corresponding to $M$. In the following lemma, we show that a $(k+1)\times (k+1)$ matrix $M$ has psd rank $k$ if and only if we can fit a spectrahedron of size $k$ between $P$ and $Q$. We show that if there is a spectrahedral shadow $C$ nested between $P$ and $Q$, then we can find a spectrahedron $C'$ of the same size such that $P\subseteq C'\subseteq C\subseteq Q$.

\begin{lemma}\label{lemma:enough_to_consider_spectrahedra}
Let $M\in\mathbb R^{(k+1)\times (k+1)}_{\geq 0}$ be a full-rank matrix such that $M\mathbf 1 = \mathbf 1$. Then, $M$ has psd rank at most $k$ if and only if we can nest a spectrahedron of size $k$ between the two polytopes $P$ and $Q$ corresponding to $M$.
\end{lemma}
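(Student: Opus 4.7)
The ``if'' direction is immediate from Theorem~\ref{thm:spectrahedron}: a spectrahedron of size $k$ nested between $P$ and $Q$ may be regarded as its own (injective) spectrahedral shadow, so $\rank_{psd}(M)\leq k$.

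For the ``only if'' direction, my plan is to start with an arbitrary spectrahedral shadow $C=\pi(L\cap\mathcal{S}^k_+)$ witnessing $\rank_{psd}(M)\leq k$ and then to cut out of it a spectrahedron that still contains $P$. The key observation that drives the argument is that because $M$ is $(k+1)\times(k+1)$ of full rank $r=k+1$, the rank-size factorization $M=AB$ has $A$ invertible with rows $(a_i^T,1)$, so the $k+1$ vertices $v_i:=a_i$ of $P$ are affinely independent in $\mathbb{R}^k$. In other words $P$ is a $k$-simplex, and this is exactly the right number of vertices to pin down a $k$-dimensional affine slice of $\mathcal{S}^k_+$ inside $\mathbb{R}^k$.

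Concretely, since each $v_i\in P\subseteq C$, I would choose preimages $X_i\in L\cap\mathcal{S}^k_+$ with $\pi(X_i)=v_i$. Any affine relation $\sum\lambda_i X_i=0$ with $\sum\lambda_i=0$ pushes forward under $\pi$ to $\sum\lambda_i v_i=0$ with $\sum\lambda_i=0$, forcing all $\lambda_i=0$; hence $X_1,\dots,X_{k+1}$ are affinely independent. Set $L':=\mathrm{aff}(X_1,\dots,X_{k+1})\subseteq L$, a $k$-dimensional affine subspace of $\mathcal{S}^k$. Then $\pi|_{L'}$ is an affine map between $k$-dimensional affine spaces whose image already contains $\mathrm{aff}(v_1,\dots,v_{k+1})=\mathbb{R}^k$, hence an affine bijection onto $\mathbb{R}^k$. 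Consequently $C':=L'\cap\mathcal{S}^k_+$ is a spectrahedron of size $k$, and its image $\pi(C')$ is an injective affine embedding of it inside $\mathbb{R}^k$.

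To finish, I would verify the nesting: since $L'\subseteq L$ one has $\pi(C')\subseteq \pi(L\cap\mathcal{S}^k_+)=C\subseteq Q$; and since $L'$ is affine and each $X_i\succeq 0$ one has $\conv(X_1,\dots,X_{k+1})\subseteq L'\cap\mathcal{S}^k_+=C'$, whence $P=\conv(v_1,\dots,v_{k+1})=\pi(\conv(X_1,\dots,X_{k+1}))\subseteq \pi(C')$. The only genuine subtlety—and really the only thing requiring care—is the interpretation of ``nesting a spectrahedron of size $k$'' between $P$ and $Q$: the set $\pi(C')$ is not literally an affine slice of $\mathcal{S}^k_+$ sitting in $\mathcal{S}^k$ but rather an injective affine image of one, and this is the only sensible reading given that $P$ and $Q$ live in $\mathbb{R}^k$. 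Beyond that I do not foresee any real obstacle; the whole argument relies crucially, but only, on the dimension count $p=q=r=k+1$, which supplies exactly enough affinely independent vertices of $P$ to lift injectively into $\widetilde{C}$.
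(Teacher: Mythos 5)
Your proof is correct and is essentially the paper's argument, phrased coordinate-freely: you lift the $k+1$ affinely independent vertices of the simplex $P$ to psd matrices $X_1,\dots,X_{k+1}$ in the preimage and take $C'=\mathrm{aff}(X_1,\dots,X_{k+1})\cap\mathcal S^k_+$, which is exactly what the paper does with its matrices $D_1,\dots,D_{k+1}$. The only difference in presentation is that the paper verifies $C'\subseteq C$ by an explicit expansion in the original pencil, whereas you get it immediately from $L'\subseteq L$; that simplification is clean and valid.
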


\begin{proof}
 If we can fit a spectrahedron of size $k$ between $P$ and $Q$, then $M$ has psd rank at most $k$.

Conversely, suppose that $M$ has psd rank at most $k$. Then there exists a slice $L$ of $\mathcal S^k_+$ and a linear map $\pi$ such that $C = \pi(L\cap \mathcal S^k_+)$ lies between $P$ and $Q$:
$$P\subseteq C\subseteq Q.$$
If $\pi$ is a $1:1$ linear map, then, the image $C$ is just a linear transformation of a spectrahedron, and is therefore a spectrahedron of the same size. So, assume that $\pi$ is not $1:1$, i.e. it has nontrivial kernel.

We can write
$$L\cap \mathcal S^k_+ = \{(x_1,\dots, x_s) \in \R^s : \sum_{i=1}^s x_iA_i + (1-\sum_{i=1}^s x_i)A_{s+1}\succeq 0\}$$
for some $A_1,\dots, A_{s+1} \in \mathcal{S}^k$. Let $u_1,\dots, u_s$ be an orthonormal basis of $\mathbb R^s$ such that $\pi(u_i)=e_i$ for $i \in \{1,\ldots,r\}$ and $\text{ker}(\pi) = \text{span}(u_{k+1},\dots, u_s)$. Let $U$ be the orthogonal matrix with columns $u_1,\dots,u_s$. Consider new coordinates $y$ such that $x = Uy$. We can write
$$L\cap\mathcal S^k_+ = \{Uy \in \R^s : \sum_{i=1}^s y_i B_i + (1 - \sum_{i=1}^s y_i) B_{s+1}\succeq 0\},$$
where $B_1,\dots,B_{s+1}$ are linear combinations of the $A_i$'s. Then
$$C = \{(y_1,\dots,y_k) \in \R^k : \exists y_{k+1},\dots,y_s \in \R \text{ s.t.} \sum_{i=1}^s y_i B_i + (1 - \sum_{i=1}^s y_i)B_{s+1}\succeq 0\}.$$

 Since $M$ is full rank, we can factor it as $M = AB$, where $A, B\in\mathbb R^{(k+1)\times (k+1)}$ and
$$A = \begin{pmatrix} 1 & 0 & \cdots & 0 & 1\\
0 & 1 & \cdots & 0 & 1\\
\vdots & &\ddots &  & \vdots\\
0 & 0& \cdots & 1 & 1\\
0 & 0& \cdots & 0 & 1
\end{pmatrix}, \quad\quad\quad \quad\quad\quad  B = A^{-1} M.$$
The inner polytope $P$ comes from an affine slice of the conic hull of the rows of $A$. Let the slice be given by the last coordinate equal to 1. Then $P$ is the standard simplex in $\mathbb R^k$, i.e.
$$P = \text{conv}\{e_1,\dots,e_k, 0\}.$$

Since $e_i\in P\subseteq C$ for $i \in \{1,\ldots,k\}$, then there exist $y_{k+1}^{(i)},\dots, y_s^{(i)}\in\mathbb R$ such that
$$D_i = B_i + \sum_{j=k+1}^s [y_j^{(i)}(B_j - B_{s+1})]\succeq 0.$$
Since $0\in P\subseteq C$, then there exist $y_{k+1}^{(0)},\dots, y_s^{(0)}\in\mathbb R$ such that
$$D_{k+1} =B_{s+1} + \sum_{j=k+1}^s [y_j^{(0)}(B_j - B_{s+1})]\succeq 0.$$
Consider the spectrahedron
$$C' = \{(y_1,\dots,y_k) : \sum_{i=1}^k y_i D_i + (1-\sum_{i=1}^k y_i)D_{k+1} \succeq 0\}.$$
We have $e_i \in C'$ for $i \in \{1,\dots,k\}$,  since $D_i\succeq 0$. Also $0\in C'$, since $D_{k+1}\succeq 0$. Thus $P\subseteq C'$.

Moreover, if $(y_1,\dots,y_k)\in C'$, then
$$0 \preceq \sum_{i=1}^k y_i D_i + (1-\sum_{i=1}^k y_i)D_{k+1}= \sum_{i=1}^k y_i(B_i + \sum_{j=k+1}^s [y_j^{(i)}(B_j - B_{s+1})])$$
$$+ (1-\sum_{i=1}^k y_i)(B_{s+1} + \sum_{j=k+1}^s [y_j^{(0)}(B_j - B_{s+1})])$$
$$=\sum_{i=1}^k y_i B_i + \sum_{j=k+1}^s(\sum_{i=1}^ky_iy_j^{(i)} - (1-\sum_{i=1}^ky_i)y_j^{(0)}) B_j$$
$$+ (1 - \sum_{i=1}^ky_i - \sum_{j=k+1}^s(\sum_{i=1}^ky_iy_j^{(i)} -(1-\sum_{i=1}^ky_i)y_j^{(0)}))B_{s+1}.$$
Therefore $(y_1,\dots, y_k)\in C$ and $P\subseteq C'\subseteq C\subseteq Q$.
\end{proof}

We conjecture that the statement of Lemma~\ref{lemma:enough_to_consider_spectrahedra} holds for matrices of any size.

\begin{conjecture}\label{conjecture:spectrahedra_are_enough}
Let $M\in\mathbb R^{p\times q}_{\geq 0}$ have rank $k+1$ and assume that $M\mathbf 1 = \mathbf 1$. Then $M$ has psd rank at most $k$ if and only if we can nest a spectrahedron of size $k$ between the two polytopes $P$ and $Q$ corresponding to $M$.
\end{conjecture}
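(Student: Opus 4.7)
The reverse implication is immediate from Theorem~\ref{thm:spectrahedron}, since any spectrahedron of size $k$ is in particular a spectrahedral shadow of size $k$. The plan for the forward implication is to extend the construction in the proof of Lemma~\ref{lemma:enough_to_consider_spectrahedra} beyond the square case.

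First I would recast the argument of Lemma~\ref{lemma:enough_to_consider_spectrahedra} in intrinsic terms. In the notation of the lemma's proof, $L \cap \mathcal{S}^k_+$ is parametrized by $\mathbb{R}^s$ and $\pi$ is the projection onto the first $k$ coordinates. The $k+1$ choices $y^{(i)}$ made there amount to a choice, for each vertex of the simplex $P$, of a preimage inside $L \cap \mathcal{S}^k_+$. Since $P$ has $k+1$ affinely independent vertices, those preimages span a $k$-dimensional affine subspace $L' \subseteq L$ on which $\pi$ restricts to a linear isomorphism, and $L' \cap \mathcal{S}^k_+$ is exactly the spectrahedron whose image under $\pi$ is the $C'$ built in the lemma; the containment $P \subseteq C' \subseteq C$ follows from convexity of $\mathcal{S}^k_+$ together with the fact that each vertex's preimage lies in $L' \cap \mathcal{S}^k_+$.

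For general $p$ the analogous task is to produce an affine section $\sigma : \mathbb{R}^k \to \mathbb{R}^{s-k}$ of $\pi$ such that, for every vertex $v_i$ of $P$, the lift $(v_i,\sigma(v_i))$ lies in $L \cap \mathcal{S}^k_+$. Once such a $\sigma$ is found, the rest of the argument is automatic: since both the PSD condition and $\sigma$ are affine in $v$, PSD-ness of the lift propagates from the vertices of $P$ to all of $P$ by convexity of $\mathcal{S}^k_+$, and $C' := \pi\bigl(\{(v,\sigma(v)):v\in\mathbb{R}^k\} \cap \mathcal{S}^k_+\bigr)$ is a spectrahedron of size $k$ in $\mathbb{R}^k$ satisfying $P \subseteq C' \subseteq C$.

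The hard part, and the main obstacle, is establishing the existence of such a $\sigma$ when $p > k+1$. Let $F_i \subseteq \mathbb{R}^{s-k}$ denote the nonempty convex fiber of $\pi$ above the vertex $v_i$; then one is asking for a single affine map $\sigma$ with $\sigma(v_i) \in F_i$ for every $i$. For arbitrary convex sets $F_i$ this is easily seen to fail (even with $k=1$ and $p=3$), so any proof must exploit the fact that the $F_i$ are fibers of one and the same size-$k$ spectrahedron. Three routes I would attempt are: (i) an SDP duality or theorem-of-alternatives analysis applied directly to the feasibility problem defining $\sigma$; (ii) a Helly-style reduction that applies Lemma~\ref{lemma:enough_to_consider_spectrahedra} to simplicial subfamilies of $k+1$ vertices at a time and patches the resulting partial sections together; and (iii) an inductive procedure that begins with the section supplied by the base case for some choice of $k+1$ affinely independent vertices and then, exploiting the continuity of the fibers in $v$, deforms $\sigma$ to absorb the remaining vertices one at a time while preserving PSD-ness on those already handled. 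Determining which of these routes actually closes the argument is where the real content of the conjecture lies.
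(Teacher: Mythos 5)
This statement is labelled a \emph{Conjecture} in the paper, and the paper does not prove it; the only result established is Lemma~\ref{lemma:enough_to_consider_spectrahedra}, which treats the simplicial case $p=q=k+1$. You correctly recognize this. Your recasting of that lemma in terms of an affine section $\sigma$ of the projection $\pi$ whose graph lies in $\mathcal{S}^k_+$ over each vertex $v_i$ of $P$ is accurate: the $k+1$ preimage choices $y^{(i)}$ in the paper's proof are exactly the data determining such a $\sigma$ on a simplex, and the convexity argument propagating feasibility from the vertices to all of $P$ is the same one the paper uses. You are also right about where the difficulty lies: for $p>k+1$ the fibers $F_i$ over the vertices of $P$ need not all be met by a single affine map, so any proof must exploit the fact that they are fibers of a single size-$k$ spectrahedron. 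The three routes you suggest (SDP duality, a Helly-style patching of the $(k+1)$-vertex case, and a deformation/induction on vertices) are plausible, but none is carried out here, none appears in the paper, and the gap they leave is precisely what makes the statement a conjecture rather than a theorem.

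One caution worth flagging: your formulation asks for $C'$ to arise as a section of the original $L\cap\mathcal{S}^k_+$, which automatically forces $C'\subseteq C$. This mirrors what the paper's lemma achieves, but it is strictly more than the conjecture requires: the conjecture only asks for \emph{some} spectrahedron of size $k$ nested between $P$ and $Q$, not one contained in the given shadow $C$. Insisting on $C'\subseteq C$ may needlessly constrain the search, and relaxing it could give a proof attempt more room. In summary, your proposal is an honest and correct analysis of what is known, where the obstruction lies, and what strategies might overcome it, but it is not a proof; since the paper offers none either, there is no proof to compare against and the problem remains open.
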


We now turn our attention to matrices which lie on the boundary of the set of matrices of fixed size, rank, and psd rank. Our goal is to present partial evidence towards Conjecture~\ref{conjecture:geometric_description2}. Suppose we have polytopes $P$ and $Q$ and a spectrahedron $C$ such that $P\subseteq C\subseteq Q$. Further, assume that $P$ has $k+1$ vertices. We show that if $k$ of the $k+1$ vertices of the polytope $P$ touch the spectrahedron $C$ at rank-one loci, then we can find a smaller spectrahedron $C'$ such that $P\subseteq C'\subseteq C\subseteq Q$. This means that the matrix $S_{P, Q}$ does not lie on the boundary $\partial \mathcal M_{k, k+1}^{(k+1)\times (k+1)}$.

\begin{lemma}\label{lemma:two_spectrahedra}
Let $P = \text{conv}(e_1,\dots, e_k, 0) \subseteq\mathbb R^k$. Let $C$ be a spectrahedron of size $k$ such that $P\subseteq C$ and the vertices $e_1,\dots, e_k$ correspond to rank one matrices in $C$. Then there exists another spectrahedron $C'$ of size $k$ such that $P\subseteq C'\subseteq C$ with all $k+1$ vertices of $P$ corresponding to rank one matrices in $C'$.
\end{lemma}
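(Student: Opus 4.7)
The plan is to construct $D_{k+1}'$ rank-one explicitly from $D_{k+1}$ and verify both inclusions directly. First, since the $D_i$ are rank-one PSD, I can write $D_i=v_iv_i^T$ and apply the congruence $X\mapsto G^TXG$ with $G=[v_1|\cdots|v_k]$ (assumed invertible; otherwise $v_1,\dots,v_k$ span a proper subspace and the rank-one locus in the defining affine subspace is larger, making the problem easier). Congruence is an automorphism of $\mathcal S^k$ preserving $\mathcal S^k_+$ and rank, so after normalization I may assume $D_i=e_ie_i^T$, and the parametrization becomes $\phi(y)=\mathrm{diag}(y)+sD_{k+1}$ with $s=1-\sum_iy_i$, while $D_{k+1}$ remains an arbitrary PSD matrix.

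In this setting I would take $D_{k+1}'=uu^T$ where $u\in\mathbb R^k$ has entries $u_i$ satisfying $u_i^2=(D_{k+1})_{ii}$, so that $D_{k+1}'$ is rank-one PSD and has the same diagonal as $D_{k+1}$. Setting $\phi'(y)=\mathrm{diag}(y)+suu^T$, the containment $P\subseteq C'$ is immediate: the vertices $e_1,\dots,e_k,0$ of $P$ map under $\phi'$ to $D_1,\dots,D_k,uu^T$, all rank-one PSD, and by convexity of $\mathcal S^k_+$ the whole simplex $P$ maps into $\mathcal S^k_+$. For $C'\subseteq C$, I would compare principal minors of $M(y):=\phi(y)$ and $M'(y):=\phi'(y)$: the diagonal entries coincide by construction, so the $1\times1$ minor conditions are identical; and Cauchy--Schwarz applied to the PSD matrix $D_{k+1}$ gives $(D_{k+1})_{ij}^2\leq(D_{k+1})_{ii}(D_{k+1})_{jj}=u_i^2u_j^2$, so every $2\times2$ principal minor of $M$ dominates the corresponding minor of $M'$.

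The main obstacle is handling principal minors of size $\geq 3$. Direct expansion of $\det M_S-\det M'_S$ yields cycle-product terms $(D_{k+1})_{i_1i_2}(D_{k+1})_{i_2i_3}\cdots(D_{k+1})_{i_\ell i_1}$ whose signs depend delicately on the sign pattern of the off-diagonals of $D_{k+1}$, and small numerical experiments show that the pointwise comparison $\det M_S\geq\det M'_S$ can fail pointwise even though the global inclusion $C'\subseteq C$ still holds. I see two plausible resolutions: (i) choose the signs of $u_i$ adaptively so that the cycle products $u_{i_1}u_{i_2}\cdots u_{i_\ell}$ match the corresponding off-diagonal cycles of $D_{k+1}$ in sign whenever possible, and combine this with the lower-order minor inequalities (already implied by $M'\succeq 0$) to bound the admissible range of $s$ and push the higher-order comparison through; or (ii) induct on $\mathrm{rank}(D_{k+1})$, at each step subtracting a carefully chosen rank-one PSD summand from the spectral decomposition $D_{k+1}=\sum_j\mu_jw_jw_j^T$ so that the new spectrahedron shrinks while still containing $P$, and iterate until the rank is one. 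The inductive approach seems conceptually cleaner but requires verifying at each step that the chosen rank-one subtraction yields a spectrahedron still contained in $C$, not merely a smaller spectrahedron in some abstract sense; this step-by-step containment is precisely the global difficulty that the direct choice of $u$ attempts to sidestep.
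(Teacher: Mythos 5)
Your setup matches the paper's exactly: after the congruence you reduce to $D_i = e_ie_i^T$ and $D_{k+1}=B$ arbitrary PSD, and you propose $D_{k+1}'=uu^T$ with $u_i^2 = B_{ii}$, which is precisely the paper's choice $d_i = \sqrt{B'_{ii}}$. The gap is that you stall on proving $C'\subseteq C$, and your two proposed routes (adaptive signs plus minor comparisons, or induction on $\mathrm{rank}(B)$) both miss the tool that makes this a one-liner: the \emph{Schur Product Theorem}. Define $S\in\mathcal S^k$ by $S_{ii}=1$, and $S_{ij}=B_{ij}/(u_iu_j)$ when $u_iu_j\neq 0$ and $S_{ij}=0$ otherwise. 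Then $S$ is obtained from $B$ by rescaling rows and columns and padding the diagonal with $1$'s where $B_{ii}=0$, so $S\succeq 0$. One checks directly that
\[
\bigl(\mathrm{diag}(x) + s\,uu^T\bigr)\circ S \;=\; \mathrm{diag}(x) + s\,B,
\]
where $\circ$ is the Hadamard product and $s=1-\sum_i x_i$: the diagonals agree because $u_i^2 S_{ii}=B_{ii}$, and the off-diagonals agree because $u_iu_j S_{ij}=B_{ij}$ (using that $B_{ii}=0$ forces $B_{ij}=0$ for PSD $B$). Since a Hadamard product of two PSD matrices is PSD, $\phi'(x)\succeq 0$ implies $\phi(x)\succeq 0$, giving $C'\subseteq C$ without any minor-by-minor bookkeeping, so the pointwise failures you observed are irrelevant.

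You also dismiss the degenerate case ($G=[v_1|\cdots|v_k]$ singular, say $\dim\mathrm{span}\{v_i\}=\ell<k$) as making the problem easier, but this is where a separate argument is genuinely needed. After a change of basis sending the span into $\mathrm{span}\{e_1,\dots,e_\ell\}$, if $B$ is also supported there the problem reduces to size $\ell$ by induction. But if some $B_{\ell+1,\ell+1}>0$, the paper picks $d$ with $d_{\ell+1}\neq 0$ and $dd^T\preceq B$; the point is that because all the $a_i'$ vanish in coordinate $\ell+1$ while $d$ does not, positive semidefiniteness of $\sum x_i a_i'(a_i')^T + s\,dd^T$ forces $s\geq 0$, and then $s\,dd^T\preceq s\,B$ gives the containment by monotonicity. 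This is not a corollary of the nondegenerate case, so "otherwise the problem is easier" is not a justification you can leave in.
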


\begin{figure}[h]
\centering
\includegraphics[width=0.4\textwidth]{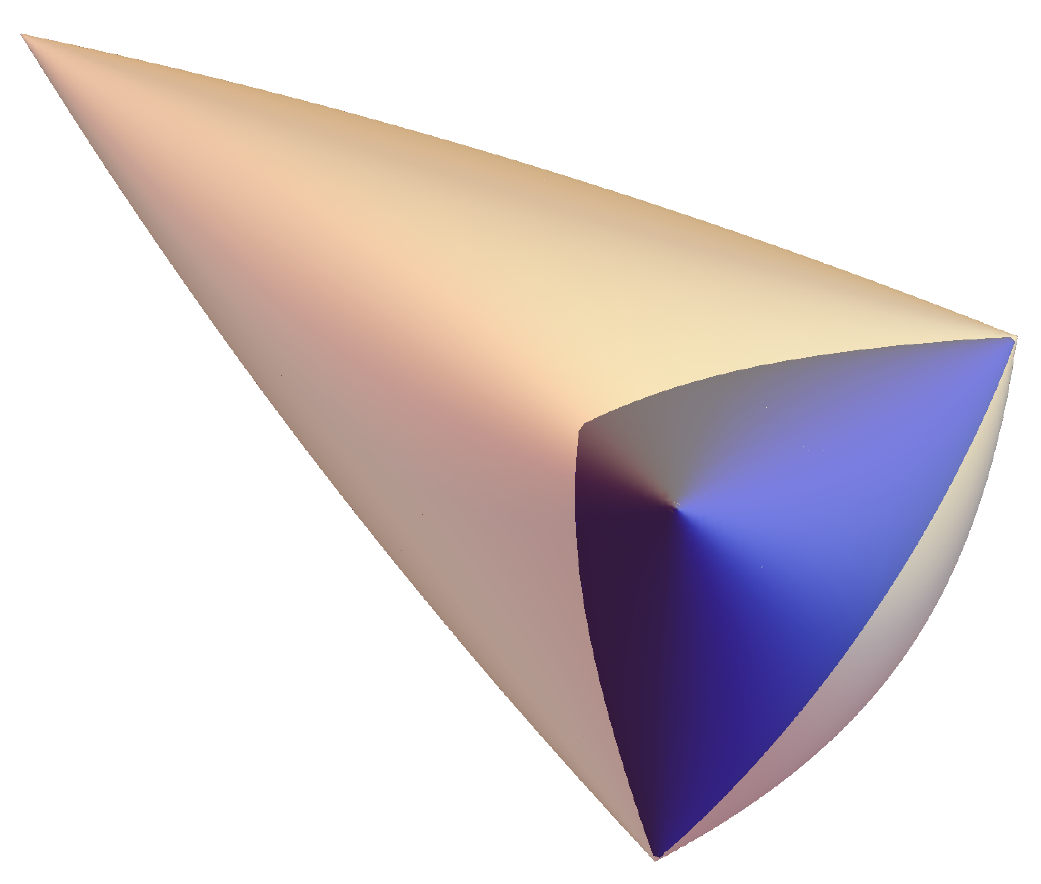}
\caption{The spectrahedra $C$ (in light yellow) and $C'$ (in blue) as in Lemma~\ref{lemma:two_spectrahedra}}
\label{fig:two_spectrahedra}
\end{figure}

\begin{proof}
The statement is trivial when $k=1$. We proceed by induction.

By the conditions in the statement of the lemma, we can assume that
$$C = \{(x_1,\dots, x_k) \in \R^k : x_1a_1a_1^T + x_2 a_2 a_2^T +\cdots + x_k a_k a_k^T + (1-\sum_{i=1}^k x_i) B \succeq 0\},$$
where  $a_1,\dots, a_k \in\mathbb R^k$ are vectors. We have $B\succeq 0$ since $0\in C$.

Suppose first that $\dim(\text{span}\{a_1,\dots, a_k\}) = \ell < k$. Let $U$ be a change of coordinates that transforms  span$\{a_1,\dots, a_k\}$ into span$\{e_1,\dots, e_l\}$. Denoting $a_i' = Ua_i$, we have
$$C = \{ (x_1,\dots, x_k) \in \R^k  : x_1a_1'(a'_1)^T + x_2 a_2' (a'_2)^T +\cdots + x_k a_k' (a'_k)^T + (1-\sum_{i=1}^k x_i) UBU^T \succeq 0\},$$
where $B' = UBU^T$ is positive semidefinite. If $B'_{i,j} = 0$ for all $i,j \geq \ell+1$, then, the statement reduces to the case of $\ell$, which is true by induction. So suppose that $B'_{\ell+1, \ell+1} > 0$  (since $B'\succeq 0$). Choose a vector $d\in\mathbb R^k$ such that $d_{\ell+1}\neq 0$ and $d d^T \preceq B'$. Consider the spectrahedron
$$C' = \{(x_1,\dots, x_k) \in \R^k : x_1a_1'(a'_1)^T + x_2 a_2' (a'_2)^T +\cdots + x_k a_k' (a'_k)^T + (1-\sum_{i=1}^k x_i) d d^T\succeq 0\}.$$
Clearly $e_1,\dots, e_k, 0\in C'$. We will show that $C'\subseteq C$. Indeed, let $(x_1,\dots, x_k)\in C'$. Since $(a_i')_{\ell+1} = 0$ for $i \in \{1, \ldots,k \}$, $d_{\ell+1} \neq 0$ and 
$$ x_1a_1'(a'_1)^T + x_2 a_2' (a'_2)^T +\cdots + x_k a_k' (a'_k)^T+ (1-\sum_{i=1}^k x_i) d d^T\succeq 0,$$
we have $(1-\sum_{i=1}^k x_i) \geq 0$. But then
$$ 0 \preceq x_1a_1'(a'_1)^T + x_2 a_2' (a'_2)^T +\cdots + x_k a_k' (a'_k)^T + (1-\sum_{i=1}^k x_i) d d^T $$
$$\preceq x_1a_1'(a'_1)^T + x_2 a_2' (a'_2)^T +\cdots + x_k a_k' (a'_k)^T + (1-\sum_{i=1}^k x_i) B'$$
and therefore $C'\subseteq C$.

Now assume that $\dim(\text{span}\{a_1,\dots, a_k\}) = k$. Let $U$ be an invertible transformation such that $Ua_i = e_i$. Then
$$C =\{(x_1,\dots, x_k) \in \R^k : x_1e_1e_1^T + x_2 e_2 e_2^T +\cdots + x_k e_k e_k^T + (1-\sum_{i=1}^k x_i) UBU^T \succeq 0\},$$
where $B' = UBU^T$ is positive semidefinite. Let $d\in\mathbb R^k$ be such that $d_i = \sqrt{B'_{i,i}}$ and let $S\in\mathbb R^{k\times k}$ be such that
$$S_{i,j} = \begin{cases} \frac{B'_{i,j}}{\sqrt{B'_{i,i}B'_{j,j}}} & \text{if } B'_{i,i}B'_{j,j} \neq 0,\\
1 & \text{if } B'_{i,i}B'_{j,j} = 0 \text{ and } i = j,\\
0 & \text{if } B'_{i,i}B'_{j,j} = 0 \text{ and } i\neq j.
\end{cases}$$
Since $B'\succeq 0$, also $S \succeq 0$, since it is obtained from $B'$ by rescaling some rows and columns and by adding $1$ on the diagonal in places that are 0 in $B'$.
Let
$$C' =\{(x_1,\dots, x_k) \in \R^k: x_1e_1e_1^T + x_2 e_2 e_2^T +\cdots + x_k e_k e_k^T + (1-\sum_{i=1}^kx_i) d d^T \succeq 0\}.$$
Then, clearly $e_1,\dots, e_k, 0\in C'$. We will show that $C'\subseteq C$. Let $(x_1,\dots, x_k)\in C'$. Then
\begin{align}\label{Cpoint}
x_1e_1e_1^T + x_2 e_2 e_2^T +\cdots + x_k e_k e_k^T + (1-\sum_{i=1}^k x_i) d d^T \succeq 0.
\end{align}
By the Schur Product Theorem, we know that the Hadamard product of two positive semidefinite matrices is positive semidefinite. Therefore, when we take the Hadamard product of the matrix \eqref{Cpoint} with $S$ we get a positive semidefinite matrix. But that Hadamard product equals
$$x_1 e_1e_1^T + x_2 e_2 e_2^T +\cdots + x_k e_k e_k^T + (1-\sum_{i=1}^k x_i) B' \succeq 0,$$
and therefore $C'\subseteq C$.

\end{proof}

Let $P$ and $C$ be as in the statement of Lemma~\ref{lemma:two_spectrahedra}. Let $Q \subset \R^k$ be any polytope such that $P \subseteq C \subseteq Q$ and  consider the slack matrix $S_{P, Q}$. The statement of Lemma~\ref{lemma:two_spectrahedra} indicates that $S_{P, Q}$ does not lie on the boundary $\partial\mathcal M^{(k+1)\times(k+1)}_{k+1, k}$, because the new spectrahedron $C'$ does not touch $Q$. As we saw in Section~\ref{section:rank3psdrank2}, in order for a matrix to lie on the boundary, the configuration $P\subseteq C\subseteq Q$ has to be very tight, and Lemma~\ref{lemma:two_spectrahedra} shows that having $k$ of the vertices of $P$ lie in the rank one locus of $C$ is not tight enough. Similarly, having $k$ of the facets of $Q$ touch $C$ at rank $k-1$ loci will not be enough. This is why we believe that all $k+1$ vertices of $P$ have to be in the rank one locus of $C$, and all $k+1$ of the facets of $Q$ have to touch $C$ at its rank $k-1$ locus.

\subsection{Computational evidence}\label{section:higher_psd_rank}

In this section we provide computational evidence for Conjecture~\ref{thm:k+1k} when $k>2$.

\begin{example} \rm
We consider the 2-dimensional family of $4 \times 4$ circulant matrices
\begin{equation}\label{circulantFamily}
\begin{bmatrix}
a & b & 1 & b\\
b & a & b & 1\\
1 & b & a & b\\
b & 1 & b & a
\end{bmatrix}
\end{equation}
which is parametrized by $a$ and $b$.

In Figure~\ref{fig:circulantPsd3}, the $4126$ green dots correspond to randomly chosen matrices of the form (\ref{circulantFamily}) that have psd rank at most three. The psd rank is computed using the code provided by the authors of~\cite{VGGT15} adapted to the computation of psd rank~\cite[Section 5.6]{KLTT15}. The red curves correspond to matrices of the form (\ref{circulantFamily}) that have a psd factorization by $3 \times 3$ rank one matrices. These curves are obtained by an elimination procedure in {\tt Macaulay2}.
\begin{figure}
\centering
\includegraphics[width=0.6\textwidth]{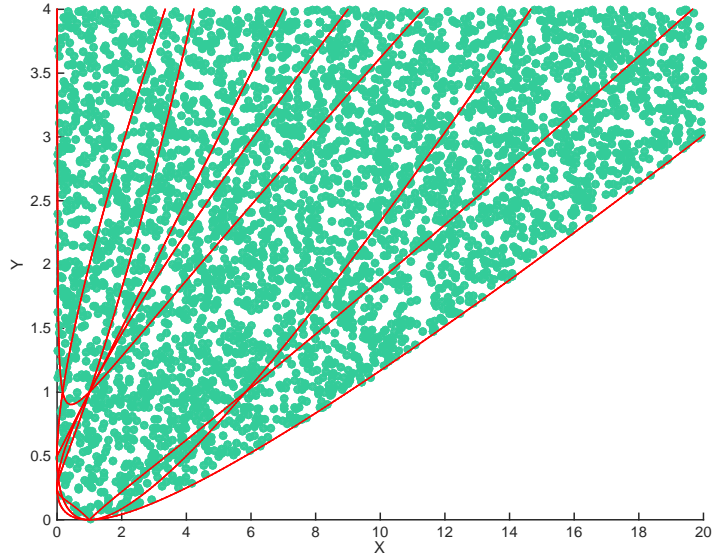}
\caption{A family of $4 \times 4$ circulant matrices of psd rank at most 3}
\label{fig:circulantPsd3}
\end{figure}
\end{example}

If the condition that $k+1$ matrices $A_i$ and $k+1$ matrices $B_j$ have rank one is equivalent to the matrix $M$ being on the algebraic boundary $\overline{\partial\mathcal M_{r, k}^{p\times q}}$, then the set of matrices that have a psd factorization by such matrices should have codimension one inside the variety $ \mathcal V_r^{p \times q}$ of $p \times q$ matrices of rank at most $r$. The dimension of $ \mathcal V_r^{p \times q}$ is $pr+qr-r^2$. 
In the following example, we test several different assignments of ranks to each of the matrices $A_i, B_j$, and we mark those whose image has dimension $pr+qr-r^2-1$.

\begin{example}\label{example:dimension_computation} \rm
Let  $A_1, \ldots , A_{p}, B_1, \ldots , B_{1} \in \mathcal{S}^k_+$ be symbolic matrices of ranks $r_1, \ldots, r_p,r'_1,\ldots,r'_q$. We construct a matrix $M$ such that $M_{ij}=\langle A_i, B_j \rangle$. We vectorize the matrix $M$ and compute its Jacobian $J$ with respect to the entries of $A_1, \ldots, A_{p}, B_1, \ldots, B_{q}$. Finally we substitute the entries of $A_1, \ldots, A_{p}, B_1, \ldots , B_{q}$ by random nonnegative integers and compute the rank of $J$ after this substitution. If $\rank(J)=pq-1$, then the matrices that have a psd factorization by matrices of ranks $\{r_1, \ldots , r_p\},\{r'_1, \ldots , r'_q\}$  give a candidate for a boundary component, assuming that the boundary components are only dependent on the ranks of the $A_i$'s and the $B_j$'s. 

\begin{table}[H]
\begin{tabular}{ | c | c | c | c | }
\hline
psd rank & p & q & ranks \\
\hline
3 & 4 & 4  & \{\{1,1,1,1\},\{1,1,1,1\}\}    \\
3 & 4 & 5 & \{\{1,1,1,1\},\{1,1,1,1,2/3\}\} \\
3 & 4 & 6 &  \{\{1,1,1,1\},\{1,1,1,1,2/3,2/3\}\},\{\{1,1,1,2\},\{1,1,1,1,1,1\}\} \\  
3 & 5 & 5 & \{\{1,1,1,1,2/3\},\{1,1,1,1,2/3\}\} \\
3 & 5 & 6 & \{\{1,1,1,1,2/3\},\{1,1,1,1,2/3,2/3\}\},\{\{1,1,1,2,3\},\{1,1,1,1,1,1\}\} \\
3 & 6 & 6 &\begin{tabular}{@{}c@{}}\{\{1,1,1,1,2/3,2/3\},\{1,1,1,1,2/3,2/3\}\},\{\{1,1,1,1,1,1\},\{1,1,1,2,3,3\}\},\\
\{\{1,1,1,1,1,1\},\{1,1,2,2,2,2\}\},\{\{1,1,1,1,1,2\},\{1,1,1,2,2,2\}\}\end{tabular} \\
\hline
\end{tabular}
\caption{Ranks of matrices in the psd factorization of a psd rank three matrix that can potentially give boundary components}
\label{table1}
\end{table}

The possible candidates for $k=3$ are summarized in Table~\ref{table1}. For all $p,q$ the case where four matrices $A_i$ and four matrices $B_j$ have rank one and all other matrices have any rank greater than one are represented. These are the cases that appear in Conjecture~\ref{thm:k+1k}. If any of  the other candidates in Table~\ref{table1} corresponded to a boundary component, then Conjecture~\ref{thm:k+1k} would be false.

If $k=4$, $p=q=10$, exactly five $A_i$ and five $B_j$ matrices have rank one and the rest of the matrices have rank two, then the Jacobian has rank 94. If the rest of the matrices in the psd factorization have rank three or four, then the Jacobian has rank 99 as expected. Hence if Conjecture~\ref{thm:k+1k} is true, then in general not every matrix on the boundary has a psd factorization with $k+1$ matrices $A_i$ and $k+1$ matrices $B_j$ having rank one, and rest of the matrices having rank two.
\end{example}

\begin{example} \rm
Using the same strategy as in Example~\ref{example:dimension_computation}, we have checked that the Jacobian has the expected rank for $p=q=r=k+1$ and $k<10$.
\end{example}

\appendix

\end{document}